\providecommand{\U}[1]{\protect\rule{.1in}{.1in}}
\newtheorem{theorem}{Theorem}
\newtheorem{corollary}[theorem]{Corollary}
\newtheorem{example}[theorem]{Example}
\newtheorem{lemma}[theorem]{Lemma}
\newenvironment{proof}[1][Proof]{\noindent\textbf{#1.} }{\ \rule{0.5em}{0.5em}}
\newenvironment{proof5}[1][Proof of Theorems \ref{main3}]{\noindent\textbf{#1.} }{\ \rule{0.5em}{0.5em}}
\newenvironment{proof6}[1][Proof of Theorem \ref{main7}]{\noindent\textbf{#1.} }{\ \rule{0.5em}{0.5em}}
\newenvironment{corollary1}[1][Conjecture 1]{\noindent\textbf{#1.} }{\ \rule{0.5em}{0.5em}}
\begin{document}

\title{On a conjecture concerning the extensions of a reciprocal matrix}
\author{Ros\'{a}rio Fernandes \thanks{Email: mrff@fct.unl.pt The work of this author
was supported in part by FCT- Funda\c{c}\~{a}o para a Ci\^{e}ncia e
Tecnologia, under project UIDB/00297/2020.}\\CMA and Departamento de Matem\'{a}tica \\Faculdade de Ci\^{e}ncias e Tecnologia \\Universidade Nova de Lisboa\\2829-516 Caparica, Portugal}
\maketitle

\begin{abstract}
Let $A$ be a reciprocal matrix of order $n$ and $w$ be its Perron eigenvector. To infer the efficiency of $w$ for $A$, based on the principle of Pareto optimal decisions, we study the strong connectivity of a certain digraph associated with $A$ and $w$. A reciprocal matrix $B$ of order $n+1$ is an extension of $A$ if the matrix $A$ is obtained from $B$ by removing its last row and column. We prove that there is no extension of a reciprocal matrix whose digraph associated with the extension and its Perron eigenvector has a source, as conjectured by Furtado and Johnson in "Efficiency analysis for the Perron vector of a reciprocal matrix". As an application, considering $n\geq 5$ and $A$ a matrix obtained from a consistent one by perturbing four
entries above the main diagonal, $x,y,z,a$, and the corresponding reciprocal entries, in
a way that there is a submatrix of size $2$ containing the four perturbed
entries and not containing a diagonal entry,
we describe the relations among $x,y,z,a$ with which $A$ always has efficient Perron eigenvector.
\end{abstract}

\textbf{Keywords}: decision processes, reciprocal
matrix, Perron eigenvector efficiency, strongly connected digraph

\textbf{AMS Subject of Classification}: 05C50, 15A18, 15A24, 05C45, 90B50

\section{Introduction}

\hspace{3ex}A matrix $A=[a_{ij}]\in M_{n}$ (the set of $n\times n$ real
matrices) with positive entries is said to be a \emph{reciprocal matrix} or a \emph{pairwise comparison
matrix}, if $a_{ij}%
=1/a_{ji}$ for all $i,j=1,\ldots,n.$ In particular, $a_{ii}=1$ for all
$i=1,\ldots,n.$ We denote by ${\cal PC}_n$ the set of all reciprocal matrices of order $n$. The AHP is a method of pairwise comparision alternatives, \cite{saaty1977,Saaty,thakkar2021multi}, where Saaty proposed the construction of a reciprocal matrix $A$ whose entry $(i,j)$ indicates the strength which alternative $i$ dominates alternative $j$ relatively to a fixed criterion/alternative.

A  matrix $A\in {\cal PC}_n$ is said to be \emph{transitive} or
\emph{consistent} if $a_{ij}a_{jk}=a_{ik}$ for all $i,j,k=1,\ldots,n.$
It is well known that a
consistent matrix $A\in {\cal PC}_n$ can be written as $ww^{-1},$ for some positive
vector $w=\left[
\begin{array}
[c]{ccc}%
w_{1} & \cdots & w_{n}%
\end{array}
\right]  ^{T},$ with
\begin{equation}
w^{-1}=\left[
\begin{array}
[c]{ccc}%
w_{1}^{-1} & \cdots & w_{n}^{-1}%
\end{array}
\right]  . \label{w-1}%
\end{equation}
Any  matrix in ${\cal PC}_2$ is consistent, but for $n>2$ this is not
generally true.

As many times a matrix $A\in {\cal PC}_n$ is not consistent, Saaty proposed a unit positive eigenvector associated with the largest eigenvalue of $A$ to the vector that gives the priorities for the alternatives relatively to the fix criterion/alternative. As we mentioned before, with this vector a consistent matrix for the problem can be constructed. However, as he noted with his consistency ratio, sometimes this vector is not a good choice for the vector of priorities.  So, we need to
reestimate the elements of the matrix $A$ to realize a
better vector or we need to use another method to analyse the vector of priorities. One of this methods is the efficiency of the vector for $A$, see \cite{blanq2006,european} and the references therein.

A positive vector $w=\left[
\begin{array}
[c]{ccc}%
w_{1} & \cdots & w_{n}%
\end{array}
\right]  ^{T}$ is said to be \emph{efficient} for a matrix $A=[a_{ij}]\in
{\cal PC}_{n}$ if there is no other vector $w^{\prime}=\left[
\begin{array}
[c]{ccc}%
w_{1}^{\prime} & \cdots & w_{n}^{\prime}%
\end{array}
\right]  ^{T}$ such that
\[
\left\vert a_{ij}-\frac{w_{i}^{\prime}}{w_{j}^{\prime}}\right\vert
\leq\left\vert a_{ij}-\frac{w_{i}}{w_{j}}\right\vert \text{ \qquad for all
}1\leq i,j\leq n,
\]
with the inequality being strict for at least one pair $(i,j).$ Otherwise, we say that $w$ is an inefficient vector for $A$. Note that if
$w$ is efficient for $A$ then any positive multiple of $w$ is also efficient.

In \cite{blanq2006}, a characterization of the efficiency of a vector $w$ was
presented in terms of a certain digraph associated with $A$ and $w$, which we denote
throughout by $G_{A,w}$ and describe in detail in the next section.

The Perron eigenvector of a matrix $A\in {\cal PC}_n$ is the eigenvector associated with the largest eigenvalue (the Perron eigenvalue) having its first component equal to one. The study of the efficiency of the Perron eigenvector for a reciprocal matrix, obtained from a consistent one by perturbing some entries, has been done only in some special cases, as we will describe in Section \ref{s3}. However, when this vector is inefficient, some authors suggest to construct a reciprocal matrix with Perron eigenvector efficient and having the initial reciprocal matrix as one of its principal submatrices, \cite{f2}.

Let $C\in {\cal PC}_{n+1}$ and $i$ be an integer such that $1\leq i\leq n+1$. We denote by $C(i)$ the matrix obtained from $C$ by removing row and column $i$. An {\it extension} of a ${\cal PC}_n$ matrix $A$ is a matrix $B\in {\cal PC}_{n+1}$ such that $B(n+1)=A$.

In \cite{f2}, the authors wrote the following conjecture:
\vspace{1ex}

\hspace{-3ex}{\bf Conjecture 1.} ``There is no extension $B$ of a reciprocal matrix $A$ such that the digraph $G_{B,w}$ has a source, where $w$ is the Perron eigenvector of $B$."
\vspace{1ex}

In this paper, we prove this conjecture. As a consequence of this result, we
study the efficiency of the Perron
eigenvector of  $A\in {\cal PC}_n$, with $n\geq 5$ and where $A$ is obtained from a consistent matrix by
perturbing four specific entries and their reciprocals. More precisely, we consider the case in which four
entries and their reciprocals are perturbed, as long as there is a $2\times2$
submatrix containing  the four perturbed entries and not containing a
principal entry of $A$.

Since the efficiency of a positive multiple of the Perron eigenvector for a reciprocal matrix $A$ can be analyzed
considering a permutation similarity of $A$ followed by a similarity by a
diagonal matrix with positive diagonal, \cite{Fer},
we will mainly focus on $n\times n$ reciprocal matrices of the form
\begin{equation}
Z_{n}(x,y,z,a):=\left[
\begin{array}
[c]{cccccccc}%
1 & 1 & 1 & \cdots & \cdots & 1 & y & x\\
1 & 1 & 1 & \cdots & \cdots & 1 & a & z\\
1 & 1 & 1 & \cdots & \cdots & 1 & 1 & 1\\
\vdots & \vdots & \vdots & \ddots & \ddots & \vdots & \vdots & \vdots\\
\vdots & \vdots & \vdots & \ddots & \ddots & \vdots & \vdots & \vdots\\
1 & 1 & 1 & \cdots & \cdots & 1 & 1 & 1\\
\smallskip\frac{1}{y} & \smallskip\frac{1}{a} & 1 & \cdots & \cdots & 1 & 1 & 1\\
\frac{1}{x} & \frac{1}{z} & 1 & \cdots & \cdots & 1 & 1 & 1
\end{array}
\right]  ,\label{recmatrix}%
\end{equation}
with $x,y,z,a>0.$

Reciprocal matrices as (\ref{recmatrix}) can occur in real life. Using a similar example as in \cite{Fer2}, consider some criterion $\cal C$, like the weight or the price, and $5$ alternatives, for example $5$ different chairs, $x_1,x_2,x_3,x_4,x_5$, such that $${\cal C}(x_1)>{\cal C}(x_2)>{\cal C}(x_3)>{\cal C}(x_4)>{\cal C}(x_5), $$ where ${\cal C}(x_i)$ denotes the value of the alternative $x_i$ for the criterion ${\cal C}$, with $i=1,2,3,4,5$. If the numbers ${\cal C}(x_1)$, with $i=1,2,3,4,5$, are so close that we only consider the pairs $$({\cal C}(x_1),{\cal C}(x_4)),\ ({\cal C}(x_1),{\cal C}(x_5)),\ ({\cal C}(x_2),{\cal C}(x_4)),\ ({\cal C}(x_2),{\cal C}(x_5))$$ and their inverses having different coordinates then the reciprocal matrix is the matrix (\ref{recmatrix}), with $n=5$.

\vspace{2ex}

The paper is organized as follows. In Section \ref{sback}, we describe in detail the digraph associated with a reciprocal matrix and a vector. When the vector is the Perron eigenvector, we prove the main result of this paper, about certain kind of vertices there are not in this digraph.   In Section \ref{s1}, we introduce known results in the literature related with the extensions of a ${\cal PC}_n$ matrix, which we also comment in on detail. Moreover, we prove the Conjecture 1. In Section \ref{s3}, after to describe known results in the literature related with the efficiency of the Perron eigenvector of some ${\cal PC}_n$  matrices obtained from a consistent matrix by perturbing some entries, we study a generalization of these results,  the matrix (\ref{recmatrix}).
In Section \ref{sproof}, we prove the results presented in Section \ref{s3}. We conclude
this paper with some final remarks and an open question in Section \ref{scon}.
\vspace{1ex}

The calculations in this paper were made with the help of the software Maple
and Octave.

\section{Digraph associated with a ${\cal PC}_n$ matrix \label{sback}}

\hspace{3ex}From Perron-Frobenius Theorem for positive matrices \cite{HJ}, it is known that the Perron eigenvalue $r$ of a matrix $A=[a_{ij}]\in {\cal PC}_{n}$ is
a simple positive eigenvalue and any
eigenvector associated  with it has nonzero entries with constant sign.  Moreover, $r\geq n$.

In \cite{blanq2006}, a characterization of the efficiency of a vector $w=\left[
\begin{array}
[c]{ccc}%
w_{1} & \cdots & w_{n}%
\end{array}
\right]  ^{T}$ for a reciprocal matrix $A\in M_{n}$ was presented using the following digraph $G_{A,w}$ associated with $A$ and $w$: the vertex set of $G_{A,w}=(V,E)$ is $V=\{1,\ldots,n\}$ and
the edge set is%
\[
E=\left\{(i,j):\frac{w_{i}}{w_{j}}\geq a_{ij}\text{, }i\neq j\right\}.
\]
It follows that if $\frac{w_i}{w_j}=a_{ij}$ then the edges $(i,j)$ and $(j,i)$ are in $G_{A,w}$.

Throughout this paper, sometimes we denote by  $i\rightarrow j$  the edge $(i,j)$.

In \cite{blanq2006} the following result was given.

\begin{theorem}
\label{blanq} Let $A\in {\cal PC}_{n}$. A positive vector $w$ is
efficient for $A$ if and only if $G_{A,w}$ is a strongly connected digraph.
\end{theorem}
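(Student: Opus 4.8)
The plan is to prove both implications by working with the \emph{ratio vector} whose coordinates are $d_i = w_i'/w_i$, where $w'$ is a hypothetical competitor, and by first recording a structural feature of $G_{A,w}$. Because $a_{ij}=1/a_{ji}$ and $w_i/w_j = 1/(w_j/w_i)$, for every pair $\{i,j\}$ at least one of the arcs $(i,j)$, $(j,i)$ lies in $E$: both lie in $E$ exactly when $w_i/w_j = a_{ij}$, and if only $(i,j)\in E$ then $w_i/w_j > a_{ij}$ strictly (since $(j,i)\notin E$ means $w_j/w_i < a_{ji}$). So between any two vertices there is always an arc, and its direction records the sign of the residual $a_{ij}-w_i/w_j$.

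First I would show that strong connectivity implies efficiency. Assume some positive $w'$ satisfies $|a_{ij} - w_i'/w_j'| \le |a_{ij} - w_i/w_j|$ for all $i,j$, and set $d_i = w_i'/w_i$. For a bidirectional pair, $w_i/w_j = a_{ij}$, so the residual at $w$ is zero and the inequality forces $w_i'/w_j' = a_{ij}$, giving $d_i = d_j$. For a one-way arc $(i,j)\in E$, $(j,i)\notin E$ we have $w_i/w_j > a_{ij}$, the residual equals $w_i/w_j - a_{ij} > 0$, and the dominance inequality confines $w_i'/w_j'$ to the interval $[\,2a_{ij}-w_i/w_j,\; w_i/w_j\,]$; in particular $w_i'/w_j' \le w_i/w_j$, i.e. $d_i \le d_j$. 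Hence along every arc of $G_{A,w}$ one has $d_i \le d_j$. Strong connectivity provides directed paths both ways between any $p$ and $q$, and chaining these inequalities yields $d_p \le d_q$ and $d_q \le d_p$, so $d_p = d_q$. Thus $d$ is constant, $w' = c\,w$ for some $c>0$, all residuals coincide with those of $w$, and no strict improvement is possible; therefore $w$ is efficient.

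For the converse I would argue by contraposition with an explicit construction. If $G_{A,w}$ is not strongly connected, there is a nonempty proper subset $S\subset V$ with no arc from $S$ to $V\setminus S$; equivalently $w_i/w_j < a_{ij}$ for all $i\in S$, $j\in V\setminus S$, and by reciprocity $w_j/w_i > a_{ji}$. Define $w'$ by multiplying the coordinates indexed by $S$ by $1+\varepsilon$ for small $\varepsilon>0$ and leaving the rest unchanged. Ratios internal to $S$ and internal to $V\setminus S$ are unaffected, so those residuals are unchanged; for $i\in S$, $j\in V\setminus S$ the ratio $w_i'/w_j'$ rises toward $a_{ij}$ from below, while for $i\in V\setminus S$, $j\in S$ it falls toward $a_{ij}$ from above, so for $\varepsilon$ small enough each cross residual strictly decreases. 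Hence $w'$ dominates $w$ with at least one strict inequality, and $w$ is inefficient.

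The block-by-block verification that the cross residuals genuinely decrease is routine; the step deserving care is the opening of the converse-of-efficiency direction, where I must convert ``$w'$ is no worse than $w$ in any entry'' into the clean arc inequality $d_i \le d_j$. This is exactly where the reciprocal identity $a_{ij}=1/a_{ji}$ is indispensable, since it guarantees that each residual at $w$ has a definite sign fixed by the presence or absence of the reverse arc, which is what allows the absolute-value constraint to collapse to a one-sided bound on $w_i'/w_j'$ and hence to a monotonicity relation transported around the strongly connected digraph.
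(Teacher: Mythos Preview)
The paper does not supply its own proof of this theorem: it is quoted from \cite{blanq2006} and used as a black box, so there is no ``paper's proof'' to compare against. Your argument is correct and is essentially the standard one from the original reference: the forward direction uses that dominance forces the ratio vector $d_i=w_i'/w_i$ to be nondecreasing along every arc of $G_{A,w}$, whence constant on a strongly connected digraph; the converse perturbs by a multiplicative factor on a terminal strongly connected component (a set $S$ with no outgoing arcs) to produce a strict improvement. One small point worth tightening in your write-up of the converse is the existence of such an $S$: you should say explicitly that the condensation of $G_{A,w}$ into its strongly connected components is a nontrivial acyclic digraph, and hence has a sink component, which is your $S$; this is implicit in ``not strongly connected'' but deserves a sentence.
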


\vspace{1ex}

As noted in \cite{Rub}, the strong connectivity of $G_{A,w}$ is equivalent to the existence of a directed hamiltonian cycle in $G_{A,w}$.

The next result shows a restriction on the vertices of $G_{A,w}$ when $w$ is the Perron eigenvector of $A$.

\begin{theorem}\label{t1} Let $A\in {\cal PC}_n$, with $n\geq 3$. Let $w$ and $G_{A,w}=(V,E)$ be its Perron eigenvector and its digraph associated with $w$. Let $i\in V$ and $k\in\{1,\ldots ,n\}\setminus\{i\}$ such that $(k,i)\not\in E$. Then, there is $j\in V$ such that $(j,i)\in E$ and $(i,j)\not\in E$.
\end{theorem}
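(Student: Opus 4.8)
The plan is to translate everything into inequalities between the products $a_{ij}w_j$ and the entry $w_i$, and then to read off the conclusion from the $i$-th coordinate of the eigenvalue equation $Aw=rw$. Since $w>0$ and $a_{ij}=1/a_{ji}$, a direct rewriting of the definition of $E$ gives the dictionary $(i,j)\in E\iff a_{ij}w_j\le w_i$, $(i,j)\notin E\iff a_{ij}w_j> w_i$, and $(j,i)\in E\iff a_{ij}w_j\ge w_i$. In this language the hypothesis $(k,i)\notin E$ becomes the strict inequality $a_{ik}w_k<w_i$, while the conclusion, the existence of $j$ with $(j,i)\in E$ and $(i,j)\notin E$, is equivalent to the existence of some $j\neq i$ with $a_{ij}w_j>w_i$; indeed $a_{ij}w_j>w_i$ already forces both $(i,j)\notin E$ and $(j,i)\in E$.

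First I would isolate the $i$-th equation of $Aw=rw$. Using $a_{ii}=1$, it reads $(r-1)w_i=\sum_{j\neq i}a_{ij}w_j$, a sum of $n-1$ positive terms. I would then argue by contradiction: suppose no admissible $j$ exists, so that $a_{ij}w_j\le w_i$ for every $j\neq i$. Summing these $n-1$ inequalities yields $(r-1)w_i\le (n-1)w_i$, and since $w_i>0$ this gives $r\le n$.

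Here the Perron--Frobenius bound $r\ge n$ recorded at the start of this section does the decisive work: together with $r\le n$ it forces $r=n$, and then the identity $(r-1)w_i=\sum_{j\neq i}a_{ij}w_j=(n-1)w_i$ turns the termwise bounds $a_{ij}w_j\le w_i$ into equalities, so that $a_{ij}w_j=w_i$ for every $j\neq i$. In particular $a_{ik}w_k=w_i$, contradicting the hypothesis $a_{ik}w_k<w_i$. This contradiction produces the required $j$, and since $j$ carries the strict inequality $a_{ij}w_j>w_i$ in the opposite direction to $k$, automatically $j\neq i$ (and $j\neq k$).

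The argument is short and essentially free of computation; the only real point is that the hypothesis contributes a strict inequality to the very row-sum that the bound $r\ge n$ constrains, leaving no slack for all remaining terms to stay at the ceiling $w_i$. I expect the main (minor) obstacle to be bookkeeping the three related conditions $(i,j)\in E$, $(i,j)\notin E$, and $(j,i)\in E$ without sign errors, and confirming that the equality case $r=n$ genuinely forces every summand to equal $w_i$.
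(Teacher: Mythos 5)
Your proof is correct and follows essentially the same route as the paper: both reduce the hypothesis to the termwise bounds $a_{ij}w_j\le w_i$, sum the $i$-th row of $Aw=rw$, and invoke $r\ge n$. The only cosmetic difference is that the paper feeds the strict inequality $a_{ik}w_k<w_i$ directly into the sum to get $r<n$ at once, whereas you first derive $r\le n$ and then rule out the equality case; the content is identical.
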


\begin{proof} Let $w=\left[
\begin{array}
[c]{ccc}%
w_{1} & \cdots & w_{n}%
\end{array}
\right]  ^{T}$ and $A=[a_{ij}]$. Let $r$ be the Perron eigenvalue of $A$. Suppose that  there is not $j\in V$ such that $(j,i)\in E$ and $(i,j)\not\in E$. Then, $$\frac{w_i}{w_j}\geq a_{ij},\ \ \ \forall j\in\{1,\ldots ,n\}\setminus\{i,k\}$$ and $$\frac{w_i}{w_k}> a_{ik}.$$ Consequently, $w_i\geq a_{ij}w_j$, for all $j\in\{1,\ldots ,n\}\setminus\{i,k\},$ and   $w_i\geq a_{ik}w_k$. From the $i$th row of $Aw=rw$ we get $$\sum_{\ell =1}^n a_{i \ell}w_\ell=rw_i.$$ So, $$nw_i>\sum_{j=1, j\neq k}^n a_{i j}w_j+a_{i k}w_k=rw_i$$ and $n>r$, which is impossible.
\end{proof}

\vspace{2ex}

\begin{corollary}\label{t2} Let $A\in {\cal PC}_n$, with $n\geq 3$, and $w$ be its Perron eigenvector. Then, $G_{A,w}$ has no source vertex.
\end{corollary}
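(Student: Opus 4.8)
The plan is to derive the corollary directly from Theorem \ref{t1}. First I would recall the definition of a source: a vertex $i \in V$ is a source of $G_{A,w}$ if it has no incoming edges, that is, there is no $j \in V$ with $(j,i) \in E$. The strategy is to show that the existence of such a vertex contradicts the conclusion of Theorem \ref{t1}.

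Assume for contradiction that $i$ is a source vertex of $G_{A,w}$. Since $n \geq 3$, there exists at least one vertex $k \in \{1,\ldots,n\} \setminus \{i\}$. Because $i$ is a source, no edge enters $i$; in particular $(k,i) \notin E$, so the hypothesis of Theorem \ref{t1} is satisfied for this pair $i,k$. Theorem \ref{t1} then guarantees the existence of some $j \in V$ with $(j,i) \in E$ and $(i,j) \notin E$. But $(j,i) \in E$ is an edge entering $i$, which directly contradicts the assumption that $i$ is a source. Hence $G_{A,w}$ has no source vertex.

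The argument is essentially immediate once Theorem \ref{t1} is in hand, so there is no real obstacle here; the only point requiring minor care is ensuring the hypothesis of the theorem is genuinely met, namely that a vertex $k \neq i$ exists with $(k,i) \notin E$. This is automatic for a source: since a source has \emph{no} incoming edges and $n \geq 3$ provides a vertex distinct from $i$, any such vertex serves as the required $k$. Thus the corollary follows with a one-line contradiction, and I would present it in exactly this form.
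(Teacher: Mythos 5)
Your argument is correct and is exactly the intended derivation: the paper states Corollary \ref{t2} as an immediate consequence of Theorem \ref{t1} without writing out a proof, and your one-line contradiction (a source has no incoming edge, so Theorem \ref{t1} applies with any $k\neq i$ and produces an incoming edge $(j,i)$) is precisely that argument. No gaps.
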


\vspace{2ex}

If $w$ is not the Perron eigenvector of $A$ then the digraph $G_{A,w}$ may have a source. For example, if $A=\left[\begin{array}{ccc}1&1&2\\ 1&1&1\\ \frac{1}{2}&1&1\end{array}\right]$ and $w=\left[\begin{array}{ccc} 1&2&3\end{array}\right]^T$ then the vertex $3$ is a source of $G_{A,w}$.

\section{Extensions of a ${\cal PC}_n$ matrix \label{s1}}

\hspace{3ex}In \cite{Fer2}, we studied a particular extension of the matrix (\ref{recmatrix}), adding a row and column with all entries equal to one. Beginning with a reciprocal matrix with an inefficient Perron eigenvector such that its extension has an efficient Perron eigenvector, we observed that the relations of the weights of the corresponding components of these two Perron eigenvectors may or may not change (Table 4 of \cite {Fer2}).

If $A\in {\cal PC}_n$ is not consistent, Furtado and Johnson, in \cite{f2}, gave an algorithm to construct an extension of $A$ with inefficient Perron eigenvector (section 4.2 of \cite{f2}). At the same paper, Theorem 25 shows that if $A$ is consistent then all extensions of $A$ will have efficient Perron eigenvectors. Moreover, when $A$ is not consistent, they proved that there is an extension of $A$ with efficient Perron eigenvector (Theorem 33 of \cite{f2}). However, they did not mention any think about the relations of the weights of the corresponding components of the Perron eigenvectors of $A$ and its extensions.

Using the construction of an extension exhibits in \cite{f2}, we conclude that sometimes the extension of $A$ with efficient Perron eigenvector do not give us a good vector of priorities for $A$. Example \ref{ee1} shows us this problem.

\vspace{2ex}

For $1\leq i\leq n$, let $r_i$ be the sum of the entries in row $i$ of $A$. The reciprocal matrix $A$ is said {\it well-behaved of type I} if $r_1-r_n>1$ (see \cite{f2}).

\begin{theorem} \cite {f2} \label{t-12} Let $A\in {\cal PC}_n$ and suppose that $e_{n-1}$ (vector with $n-1$ coordinates equal to one) is efficient for $A(n)$ and the Perron eigenvector of $A$ is inefficient for $A$. Then, $A(n)$ is not well-behaved.
\end{theorem}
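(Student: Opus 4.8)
The plan is to prove the statement in its direct form, reading the two hypotheses and the conclusion through the digraph characterisation of Theorem \ref{blanq}. Write $A=[a_{ij}]\in{\cal PC}_n$ and let $s_i:=\sum_{j=1}^{n-1}a_{ij}$ denote the $i$th row sum of $A(n)$, so that well-behavedness of $A(n)$ reads $s_1-s_{n-1}>1$ and the desired conclusion is $s_1-s_{n-1}\le 1$. The hypothesis that $e_{n-1}$ is efficient for $A(n)$ becomes, via Theorem \ref{blanq}, the statement that $G_{A(n),e_{n-1}}$ is strongly connected; its edges are exactly the pairs $(i,j)$ with $a_{ij}\le 1$ for $i,j\in\{1,\dots,n-1\}$. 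The hypothesis that the Perron eigenvector $w$ of $A$ is inefficient becomes, again by Theorem \ref{blanq}, the statement that $G_{A,w}$ is not strongly connected. The goal is then to combine these two facts to force the row-sum inequality $s_1-s_{n-1}\le 1$.

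Since $G_{A,w}$ is not strongly connected, its condensation is a nontrivial acyclic digraph, and by Corollary \ref{t2} no single vertex is a source; I would extract from this a proper, nonempty vertex set $S$ that receives no edges from its complement (a source strong component), necessarily with $|S|\ge 2$. For $i\in S$ and $j\notin S$ the absence of the edge $(j,i)$ gives $w_j/w_i<a_{ji}$, hence $a_{ij}w_j<w_i$; feeding this into the Perron identity $\sum_{\ell}a_{i\ell}w_\ell=r\,w_i$ and using the bound $r\ge n$ recalled in Section \ref{sback} yields $\sum_{\ell\in S}a_{i\ell}w_\ell>|S|\,w_i$ for every $i\in S$. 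The next step is to locate the distinguished vertices $1$, $n-1$ and the adjoined vertex $n$ relative to $S$: the new vertex $n$ automatically has an incoming edge (no-source again), and I would use the strong connectivity of $G_{A(n),e_{n-1}}$ to show that $S\cap\{1,\dots,n-1\}$ cannot be an arbitrary proper subset, tightly constraining where $1$ and $n-1$ sit.

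Finally I would convert the weighted inequalities on $S$, read off rows $1$ and $n-1$, into the unweighted row-sum comparison $s_1-s_{n-1}\le 1$, using the inherited edge thresholds $a_{ij}\le 1$ coming from the strong connectivity of $G_{A(n),e_{n-1}}$. The main obstacle—and the step I expect to be genuinely delicate—is precisely this conversion: efficiency of $e_{n-1}$ controls $A(n)$ through the all-ones comparison (thresholds $a_{ij}\le 1$), whereas every inequality produced by the inefficiency of $w$ is phrased through the ratios $w_i/w_j$ of the true Perron eigenvector, which need not equal $1$. Bridging the two requires quantifying how far $w$ is allowed to drift from the all-ones vector once the last row and column are adjoined to $A(n)$—that is, a perturbation estimate for the bordered Perron eigenproblem—and then showing that this drift is small enough that the edge directions forced by $S$ survive and collapse onto the row-sum inequality. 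I expect the bookkeeping between rows $1$ and $n-1$ together with the careful placement of vertex $n$ to be the technical heart of the argument.
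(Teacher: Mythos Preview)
The paper does not prove this statement: Theorem~\ref{t-12} is quoted from \cite{f2} and is used here only as a black box inside Example~\ref{ee1}. So there is no ``paper's own proof'' to compare against.

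That said, your proposal is not a proof but an outline with a self-identified gap at exactly the point that matters. Everything through the extraction of a source strong component $S$ with $|S|\ge 2$ and the weighted inequalities $\sum_{\ell\in S}a_{i\ell}w_\ell>|S|\,w_i$ is fine and follows the mechanics of Theorem~\ref{t1} and Corollary~\ref{t2}. The trouble is the final conversion step, which you yourself flag: the hypothesis on $e_{n-1}$ controls the pattern of entries $a_{ij}\le 1$ in $A(n)$, while every inequality you have produced from the inefficiency of $w$ is in terms of the Perron ratios $w_i/w_j$. You propose to bridge these via ``a perturbation estimate for the bordered Perron eigenproblem'' showing that $w$ cannot drift too far from $e_{n-1}$; but no such estimate is supplied, and there is no reason to expect one strong enough for your purpose, since the last column of $A$ is completely unconstrained and can move the Perron vector arbitrarily far from the all-ones vector.

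There is also a structural mismatch that your sketch does not address: the conclusion singles out the specific indices $1$ and $n-1$ (that is what ``well-behaved of type I'' means here), yet nothing in your source-component argument distinguishes these two rows from any others. Without an explanation of why the failure of strong connectivity forces a relation specifically between rows $1$ and $n-1$, the plan cannot land on $s_1-s_{n-1}\le 1$. As written, the proposal stops at precisely the step that carries the content of the theorem.
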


\begin{example} \label{ee1} Let $B$ be the following reciprocal matrix $$B=\left[\begin{array}{ccccc}1&1&1&0.9933&2.5\\ 1&1&1&0.6666&1\\  1&1&1&0.6666&0.5\\ 1.0067&1.5&1.5&1&0.75\\ 0.4&1&2&1.3333&1\end{array}\right].$$ The Perron eigenvector of $B$ is $$W=\left[\begin{array}{ccccc}1&0.7110&0.6325&0.8555&0.8258\end{array}\right]^T=
\left[\begin{array}{ccccc}w_1&w_2&w_3&w_4&w_5\end{array}\right]^T.$$ Thus, $$w_1>w_4>w_5>w_2>w_3.$$

If $D=\left[\begin{array}{ccccc}\frac{1}{2}&0&0&0&0\\ 0&\frac{1}{2}&0&0&0\\  0&0&\frac{1}{2}&0&0\\ 0&0&0&\frac{1}{3}&0\\ 0&0&0&0&\frac{1}{4}\end{array}\right]$ then $$B'=DBD^{-1}=\left[\begin{array}{ccccc}1&1&1&1.49&5\\ 1&1&1&1&2\\  1&1&1&1&1\\ 1/1.49&1&1&1&1\\ 1/5&1/2&1&1&1\end{array}\right].$$ The matrix $B'$ was analysed in \cite{Fer2}, Table 3, and its Perron eigenvector is inefficient. However, the vector $e_5=\left[\begin{array}{ccccc}1&1&1&1&1\end{array}\right]^T$ is efficient for $B'$ (note that all entries in the third row of $B'$ are equal to one). Moreover, $B'$ is well-behaved.

We construct an extension $A'$ of $B'$ with constant row sums. So, the vector $e_6=\left[\begin{array}{cccccc}1&1&1&1&1&1\end{array}\right]^T$ is the Perron eigenvector of $A'$ and, by Theorem \ref{t-12}, is efficient for $A'$. Moreover, $A=(D^{-1}\oplus [1])A'(D\oplus [1])$ satisfies $A(6)=B$ and the Perron eigenvector of $A$, $$V=\left[\begin{array}{cccccc}1&1&1&3/2&2&1/2\end{array}\right]^T=
\left[\begin{array}{cccccc}v_1&v_2&v_3&v_4&v_5&v_6\end{array}\right]^T,$$ is efficient for $A$. Thus, $$v_5>v_4>v_3=v_2=v_1.$$ Therefore, the relations of the weights of the corresponding components of the Perron eigenvectors of $B$ and its extension $A$ change.

\vspace{2ex}

If we consider $B'$ our initial matrix, then its Perron eigenvector is the same as the Perron eigenvector of $B$ and is not efficient. Since $A'$ is the extension of $B'$ with Perron eigenvector $e_6=\left[\begin{array}{cccccc}1&1&1&1&1&1\end{array}\right]^T$, we get a vector with all components equal.
\end{example}

\vspace{2ex}

Using Corollary \ref{t2} we get the Conjecture 1.
\vspace{1ex}

\begin{corollary1} There is no extension $B$ of a ${\cal PC}_n$ matrix $A$ for which the associated digraph of $B$ and its Perron eigenvector has a source vertex.
\end{corollary1}

\section{Efficiency of the Perron eigenvector\label{s3}}

\hspace{3ex}Let $A,B\in {\cal PC}_{n}$. $A$ is {\it monomial similar} to $B$ if $B=QAQ^{-1}$, where $Q$ is a product of a diagonal matrix with positive diagonal entries and a permutation matrix, \cite{Fer}.

Fernandes and Furtado, in \cite{Fer}, proved the next lemma.

\begin{lemma} \label{lem1} Let $A,B\in {\cal PC}_{n}$ monomial similar with $B=QAQ^{-1}$, where $Q$ is a product of a diagonal matrix with positive diagonal entries and a permutation matrix. Let $w$ be the Perron eigenvector of $A$. Then
\begin{enumerate}
\item the vector $Qw$ is a positive multiple of the Perron eigenvector of $B$.
\item The digraphs $G_{A,w}$ and $G_{B,Qw}$ are isomorphic.
\end{enumerate}
\end{lemma}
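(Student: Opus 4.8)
The plan is to treat the two assertions in turn, after first putting $Q$ into a convenient normal form. Since conjugating a positive diagonal matrix by a permutation matrix again yields a positive diagonal matrix, any product of a positive diagonal matrix and a permutation matrix can be rewritten as $Q=DP$, where $D=\mathrm{diag}(d_1,\dots,d_n)$ with $d_i>0$ and $P$ is the permutation matrix of some $\sigma\in S_n$; so I would assume $Q=DP$ without loss of generality, fixing the convention $Pe_j=e_{\sigma(j)}$.

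For the first assertion, let $r$ be the Perron eigenvalue of $A$, so $Aw=rw$. Then $B(Qw)=QAQ^{-1}Qw=QAw=r(Qw)$, so $Qw$ is an eigenvector of $B$ for $r$. Because $Q=DP$ has exactly one nonzero entry in each row and column, and that entry is positive, $Qw$ is a positive vector whenever $w$ is. Since $B$ is similar to $A$, it has the same spectrum, so $r$ is the Perron eigenvalue of $B$ as well; by the Perron--Frobenius theorem the positive eigenvector of $B$ associated with $r$ is unique up to a positive scalar. Hence $Qw$ is a positive multiple of the Perron eigenvector of $B$, which proves the first part.

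For the second assertion, I would compute the data of $G_{B,Qw}$ explicitly in terms of that of $G_{A,w}$. Writing $B=[b_{ij}]$ and $v=Qw$, one gets $v_i=d_i\,w_{\sigma^{-1}(i)}$ and $b_{ij}=d_i d_j^{-1}\,a_{\sigma^{-1}(i),\sigma^{-1}(j)}$. The point is that the diagonal factors cancel in the comparison defining the edge set: multiplying through by the positive quantity $d_j/d_i$, the inequality $\frac{v_i}{v_j}\ge b_{ij}$ becomes $\frac{w_{\sigma^{-1}(i)}}{w_{\sigma^{-1}(j)}}\ge a_{\sigma^{-1}(i),\sigma^{-1}(j)}$; that is, $(i,j)\in E(G_{B,v})$ if and only if $(\sigma^{-1}(i),\sigma^{-1}(j))\in E(G_{A,w})$. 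Since $\sigma$ is a bijection, $i\neq j$ corresponds to $\sigma^{-1}(i)\neq\sigma^{-1}(j)$, so $\sigma$ is a bijection of the common vertex set $\{1,\dots,n\}$ carrying edges of $G_{A,w}$ exactly onto edges of $G_{B,v}$, i.e. a digraph isomorphism.

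There is no serious obstacle here: both parts are essentially bookkeeping. The only points requiring care are the reduction to $Q=DP$ and keeping the permutation conventions consistent so that the cancellation of the $d_i$'s is displayed correctly; once that is arranged, the equivalence of the edge conditions is immediate from the definition of $G_{A,w}$, and the spectral part follows directly from similarity together with Perron--Frobenius.
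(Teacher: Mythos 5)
Your proof is correct. Note that the paper itself gives no proof of this lemma --- it is quoted as a known result from the cited work of Fernandes and Furtado --- so there is nothing in the text to compare against; your argument (reduction to $Q=DP$, similarity plus Perron--Frobenius for part 1, and the cancellation of the diagonal factors $d_i$ in the edge condition so that $\sigma$ gives the digraph isomorphism for part 2) is the standard one and is carried out with the conventions handled correctly.
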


If $A$ is obtained from a consistent matrix by perturbing one entry and its reciprocal then $A$ is monomial similar to  $Z_n(x,1,1,1),$ with $x>0$ and $n\geq 3$.
  \'{A}bele-Nagy and Boz\'{o}ki, in \cite{p6}, proved that all matrices $Z_n(x,1,1,1)$, with $x>0$, have efficient Perron eigenvectors.
  \vspace{1ex}

  On the other hand, if $A$ is obtained from a consistent matrix by perturbing two entries and their reciprocals then $A$ is monomial similar to $Z_n(x,y,1,1)$, with $x,y>0$ and $n\geq 3$, or to $Z_n(x,1,1,a)$, with $x,a>0$ and $n\geq 4$.
  \'{A}bele-Nagy and et., in \cite{p2}, proved that all matrices $Z_n(x,y,1,1)$ and $Z_n(x,1,1,a)$, with $x,y,a>0$, have efficient Perron eigenvectors.

   Continuing this study, in \cite{Fer}, we studied the matrix $Z_n(x,y,z,1)$, with $x,y,z>0$ and $n\geq 4$.

Note that, the matrix studied in \cite{f1}, $$A=\left[
\begin{array}
[c]{cccccc}%
1 & a & b & 1 & \cdots & 1\\
\smallskip\frac{1}{a} & 1 & c & 1 & \cdots & 1 \\
\frac{1}{b} & \frac{1}{c} & 1 & 1 & \cdots & 1 \\
\vdots & \vdots & \vdots & \vdots & \ddots & \vdots \\
1 & 1 & 1 & 1 & \cdots & 1
\end{array}
\right]\in {\cal PC}_{n},$$ with $a,b,c>0$, when $n= 4$ is monomial similar to $Z_4\left(\frac{b}{a},\frac{1}{a},c,1\right)$.

If $w$ is the Perron eigenvector and $r$ is the Perron eigenvalue of $Z_n(x,y,z,1)$, using the characteristic equation, $(Z_n(x,y,z,1)-rI_n)w=0$, Fernandes and Furtado, in \cite{Fer}, found some conditions for the efficiency of $w$ in $Z_n(x,y,z,1)$. Later, using the same technique, Furtado and Johnson, in \cite{f1}, proved the next theorem.

\begin{theorem}\label{fujo} Let $n= 4$ and $Z_4(x,y,z,1)$ be the matrix in (\ref{recmatrix}), with $x,y,z>0$. If one of the following conditions hold:
\begin{enumerate}
\item $y\leq x\leq z$, $y\leq 1$, $1\leq z$;
\item $y\leq x$, $y\leq 1$, $z\leq 1$, $z\leq x$;
\item $1\leq y\leq x$, $1\leq z\leq x$;
\item $z\leq x\leq y$, $1\leq y$, $z\leq 1$;
\item $x\leq y$, $1\leq y$, $1\leq z$, $x\leq z$;
\item $x\leq y\leq 1$, $x\leq z\leq 1$,
\end{enumerate}
then the Perron eigenvector of $Z_4(x,y,z,1)$ is efficient.

\end{theorem}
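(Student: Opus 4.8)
The plan is to invoke Theorem \ref{blanq}: the Perron eigenvector $w$ of $Z_4(x,y,z,1)$ is efficient exactly when the digraph $G_{Z_4(x,y,z,1),w}$ is strongly connected, and by the remark following that theorem this is equivalent to $G_{Z_4(x,y,z,1),w}$ containing a directed Hamiltonian cycle. So in each of the six cases it suffices to exhibit one such cycle. Writing $w=[w_1\ w_2\ w_3\ w_4]^T$ and letting $r$ denote the Perron eigenvalue, the edges of the digraph are controlled by six sign comparisons: $w_1-w_2$, $w_2-w_3$, $w_3-w_4$ govern the unit entries $a_{12},a_{23},a_{34}$, while $w_1-yw_3$, $w_1-xw_4$, $w_2-zw_4$ govern the perturbed entries $a_{13}=y$, $a_{14}=x$, $a_{24}=z$. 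For instance $w_1\ge yw_3$ forces the edge $(1,3)$ and $w_1\le yw_3$ forces $(3,1)$, with both present in the equality case.

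First I would read these six signs off the eigenvalue equation $(Z_4(x,y,z,1)-rI_4)w=0$ by taking suitable linear combinations of its rows. Subtracting consecutive rows, and subtracting $y\cdot(\text{row }3)$, $x\cdot(\text{row }4)$ and $z\cdot(\text{row }4)$ from rows $1$, $1$ and $2$ respectively, gives
\[ r(w_1-w_2)=(y-1)w_3+(x-z)w_4, \qquad r(w_2-w_3)=\tfrac{y-1}{y}w_1+(z-1)w_4, \]
\[ r(w_3-w_4)=\tfrac{x-y}{xy}w_1+\tfrac{z-1}{z}w_2, \qquad r(w_1-yw_3)=(1-y)w_2+(x-y)w_4, \]
\[ r(w_1-xw_4)=\tfrac{z-x}{z}w_2+(y-x)w_3, \qquad r(w_2-zw_4)=\tfrac{x-z}{x}w_1+(1-z)w_3. \]
Since $r>0$ and all $w_i>0$ by Perron--Frobenius, each left-hand side has the sign of its right-hand side, and every right-hand side is a positive combination of the four quantities $x-y$, $x-z$, $y-1$, $z-1$.

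Next I would run the case analysis. The six hypotheses are arranged precisely so that, in each case, four of the six right-hand sides have both of their coefficients of a single sign, hence a determined sign, and those four forced edges close up into a Hamiltonian cycle. For example, condition (1), namely $y\le x\le z$ with $y\le 1\le z$, forces the edges $(2,1)$, $(3,4)$, $(1,3)$ and $(4,2)$, giving the cycle $1\to 3\to 4\to 2\to 1$; conditions (2)--(6) similarly produce the cycles $1\to 3\to 2\to 4\to 1$, $1\to 2\to 3\to 4\to 1$, $1\to 2\to 4\to 3\to 1$, $1\to 4\to 2\to 3\to 1$ and $1\to 4\to 3\to 2\to 1$. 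In every case the cycle uses only sign-definite edges, so strong connectivity follows and $w$ is efficient.

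The hard part is not any single computation but the bookkeeping: one must confirm that in each of the six regions the four relations that become sign-definite are exactly the ones that form a cycle, while the two mixed-sign relations are never needed. The decisive idea that makes this work is to compare $w_1$ with $yw_3$ (and $w_1$ with $xw_4$, $w_2$ with $zw_4$) rather than the bare components, so that each perturbed entry is matched to its own difference; this is what collapses all six signs to the coefficient signs of $x-y$, $x-z$, $y-1$, $z-1$ and makes the six conditions line up with six distinct Hamiltonian cycles.
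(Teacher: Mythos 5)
Your proposal is correct, and it uses essentially the technique this paper employs: the paper itself only cites Theorem \ref{fujo} from \cite{f1}, but the method it attributes to that source and uses for its own $n\geq 5$ analogues (equations (\ref{j13})--(\ref{j22}), Lemmas \ref{le-1} and \ref{le-2}, and the table of Hamiltonian cycles in the proof of Theorem \ref{main3}) is exactly your scheme of subtracting weighted rows of $(Z_4(x,y,z,1)-rI_4)w=0$ to fix edge signs and then closing a directed Hamiltonian cycle in each region. All six of your sign computations and cycles check out; the only cosmetic slip is the phrase ``positive combination of $x-y$, $x-z$, $y-1$, $z-1$,'' since some right-hand sides carry the negatives of these quantities, though your case analysis handles the signs correctly.
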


On the other hand, observing that the efficiency of the Perron eigenvector of $Z_n(x,y,z,1)$ is connected with the relations among $1,x,y,z$, Fernandes and Palheira, in \cite{Fer2}, proved the two following theorems. Note that, in the versions that appear in \cite{Fer2}, we did not mention the boundary conditions, however in their proofs, we studied these cases.

\begin{theorem} \label{maint1}
\label{main}Let $n= 4$ and $Z_{4}(x,y,z,1)$ be the matrix in (\ref{recmatrix}%
), with $x,y,z>0.$ If neither
\begin{enumerate} \item[(i)] $\min\{1,x\} < \min\{y,z\}< \max\{1,x\} < \max\{y,z\},$ $1\neq x$ and $y\neq z$, nor \item[(ii)] $\min\{y,z\} < \min\{1,x\}< \max\{y,z\} < \max\{1,x\},$ $1\neq x$ and $y\neq z$\end{enumerate} then the Perron eigenvector of $Z_{4}(x,y,z,1)$ is efficient.
\end{theorem}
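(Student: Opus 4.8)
The plan is to prove the statement via its contrapositive and the efficiency characterization (Theorem \ref{blanq}): the Perron eigenvector $w$ of $Z_4(x,y,z,1)$ is efficient if and only if $G_{Z_4(x,y,z,1),w}$ is strongly connected. So I aim to show that whenever neither of the two forbidden configurations (i) nor (ii) holds, the digraph $G_{Z_4,w}$ admits a directed Hamiltonian cycle (equivalently, is strongly connected, by the remark from \cite{Rub}). First I would set up notation: write $w=(w_1,w_2,w_3,w_4)^T$ for the Perron eigenvector normalized with $w_1=1$, and record which off-diagonal entries differ from $1$. For $Z_4(x,y,z,1)$ the only nontrivial entries are $a_{13}=x$, $a_{14}=y$ (reading off the pattern in (\ref{recmatrix}) specialized to $n=4$ and $a=1$), so the edges between vertices $3,4$ and the rest that are \emph{not} automatically present are governed entirely by how $w_1/w_3,\ w_1/w_4,\ w_2/w_3,\ w_2/w_4$ compare to the four perturbed entries. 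The arcs among the ``all-ones'' positions are forced in both directions by equalities, so the combinatorics of the digraph collapses to a small finite case analysis.

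The key technical engine is to translate the order relations among $1,x,y,z$ into sign information about the ratios $w_i/w_j$ versus the entries $a_{ij}$, so that each hypothesis ``$\alpha\le\beta$'' in the conclusion pins down the direction of a specific arc. For this I would use the characteristic equation $(Z_4(x,y,z,1)-rI_4)w=0$ together with $r\ge n=4$, exactly as in the cited technique of Fernandes--Furtado \cite{Fer} and Furtado--Johnson \cite{f1}: solving the linear system row by row expresses each $w_i/w_j$ as a rational function of $x,y,z,r$, and comparing with $a_{ij}$ reduces to sign conditions on polynomials in $x,y,z$ whose feasible region is cut out by the relations among $1,x,y,z$. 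Here I would lean heavily on Corollary \ref{t2} (equivalently Conjecture~1 as just proved), which guarantees that $G_{Z_4,w}$ has no source; this removes whole families of disconnected configurations from consideration before any Hamiltonicity argument, since a digraph with no source is already ``close'' to strongly connected on four vertices.

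The heart of the argument is then a finite but delicate case split on the relative order of the four quantities $1,x,y,z$. The two excluded patterns (i) and (ii) describe precisely the ``interleaved'' orderings in which $\{1,x\}$ and $\{y,z\}$ alternate; outside these patterns one of the pairs dominates or nests the other, and I expect each such ordering to yield an explicit Hamiltonian cycle on $\{1,2,3,4\}$ whose four arcs I can verify are present from the sign conditions derived above. I would organize the complementary region of the hypotheses into a manageable list (the non-interleaved orderings, plus the boundary/equality cases $x=1$ or $y=z$, which the authors note must be handled separately) and exhibit a cycle in each. The main obstacle I anticipate is the boundary behavior: when $x=1$ or $y=z$ the strict comparisons degenerate, several arcs become bidirectional simultaneously, and one must check that strong connectivity still survives rather than silently breaking; controlling the sign of the relevant polynomial discriminants in $r$ (using $r\ge 4$ and the Perron simplicity from the Perron--Frobenius input in Section \ref{sback}) at these boundaries is where the calculation will be tightest, and is likely the step that the authors defer to their Maple/Octave verification.
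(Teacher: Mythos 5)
First, a point of orientation: the paper does not actually prove Theorem \ref{maint1}. It is quoted from Fernandes and Palheira \cite{Fer2} (and noted to be equivalent to Theorem \ref{fujo} of \cite{f1}); the only proofs the paper carries out are for the four-parameter case $Z_n(x,y,z,a)$ with $n\geq 5$ (Theorems \ref{main3} and \ref{main7}), via Lemmas \ref{le-1}--\ref{le-3}, Theorem \ref{t1}, and the tables of cycles. Your overall plan --- translate orderings of $1,x,y,z$ into arc directions using $(Z_4(x,y,z,1)-rI_4)w=0$ and $r\geq 4$, exhibit a Hamiltonian cycle or a cycle plus the no-source Corollary \ref{t2} in each non-excluded ordering, and treat the boundaries $x=1$, $y=z$ separately --- is indeed the same template, so in spirit you are on the right track.

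However, there are two concrete problems. First, your reading of the matrix is wrong: specializing (\ref{recmatrix}) to $n=4$, $a=1$ gives $a_{13}=y$, $a_{14}=x$, $a_{23}=1$, $a_{24}=z$, so you have swapped $x$ and $y$ and, more seriously, omitted $z$ from the list of perturbed entries; every sign condition you would derive downstream inherits this mislabelling. Second, and this is the step that would actually fail, your claim that ``the arcs among the all-ones positions are forced in both directions by equalities'' is false for $n=4$. The edge $(i,j)$ with $a_{ij}=1$ is present iff $w_i\geq w_j$, and both directions occur only when $w_i=w_j$; but for $Z_4(x,y,z,1)$ no two rows coincide in general (rows $1$ and $2$ agree only if $y=1$ and $x=z$, etc.), so none of the pairs $\{1,2\}$, $\{2,3\}$, $\{3,4\}$ gives an automatic bidirectional arc. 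The collapse you are relying on is a feature of $n>5$, where the identical middle rows force $w_i=w_3$ for $3\leq i\leq n-2$; it does not exist at $n=4$, so the direction of all six arcs must be extracted from the eigenvector equations (the analogues of (\ref{j13})--(\ref{j22})), and the entire case analysis --- which is the actual content of the theorem --- remains to be done. Finally, the remark that a source-free digraph on four vertices is ``close'' to strongly connected is not a usable step: Theorem \ref{main7} shows the inefficient configurations are exactly those with a sink, which are source-free; Corollary \ref{t2} only helps when combined with an explicit cycle through all but one vertex plus an out-arc from the remaining vertex, as in Tables \ref{tab3} and \ref{tab4}.
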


\begin{theorem}
\label{main2}Let $n\geq 5$ and $Z_{n}(x,y,z,1)$ be the matrix in (\ref{recmatrix}%
), with $x,y,z>0.$ If neither
\begin{enumerate} \item[(i)] $1 < z< x < y,$ nor \item[(ii)] $z < 1< y < x,$ nor \item[(iii)] $z< x< y < 1,$ nor \item[(iv)] $x < z< 1 < y,$ \end{enumerate} then the Perron eigenvector of $Z_{n}(x,y,z,1)$ is efficient.
\end{theorem}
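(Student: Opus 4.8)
The plan is to combine the digraph characterization of efficiency (Theorem \ref{blanq}) with the no-source property just established (Corollary \ref{t2}), reducing the problem to a finite case analysis driven by the Perron eigenvector equations. First I would exploit the structure of $Z_n(x,y,z,1)$. Write $w=(w_1,\dots,w_n)^T$ for the Perron eigenvector and $r$ for the Perron eigenvalue, so $r>n\ge 5$. Rows $3,\dots,n-2$ are identical (all ones), so the corresponding equations of $Z_n(x,y,z,1)w=rw$ force $w_3=\cdots=w_{n-2}=:p$, and row $3$ gives $rp=\sum_{\ell}w_\ell=:S$, i.e. $p=S/r$. Substituting $S=rp$ into the equations for rows $1,2,n-1,n$ yields the clean relations
\begin{align*}
r(w_1-p)&=(y-1)w_{n-1}+(x-1)w_n,\\
r(w_2-p)&=(z-1)w_n,\\
r(w_{n-1}-p)&=(\tfrac1y-1)w_1,\\
r(w_n-p)&=(\tfrac1x-1)w_1+(\tfrac1z-1)w_2.
\end{align*}
The middle two relations immediately give $\operatorname{sgn}(w_2-p)=\operatorname{sgn}(z-1)$ and $\operatorname{sgn}(w_{n-1}-p)=\operatorname{sgn}(1-y)$; the first and last are coupled and pin down $w_1-p$ and $w_n-p$ only after solving the system. (This is exactly the point where $n\ge 5$ matters: for $n=4$ there is no all-ones row, $p=S/r$ is unavailable, and the conditions genuinely differ, as in Theorem \ref{maint1}.)

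Second, I would translate efficiency into a small connectivity question. The middle vertices $3,\dots,n-2$ all carry weight $p$ and have mutual entries $1$, so they form a bidirectionally complete, hence strongly connected, block $B$; moreover every $v\in\{1,2,n-1,n\}$ has $a_{vm}=1$ for each middle $m$, so $v\to B$ iff $w_v\ge p$ and $B\to v$ iff $w_v\le p$, uniformly in $m$. Hence $G_{A,w}$ is strongly connected iff the reduced digraph $H$ on the five nodes $1,2,n-1,n,B$ is, where the six edges among the special vertices are governed by $w_1$ vs $w_2$, $w_{n-1}$ vs $w_n$, $w_2$ vs $w_{n-1}$ (all against the entry $1$) together with the three perturbed comparisons $w_1/w_{n-1}$ vs $y$, $w_1/w_n$ vs $x$, and $w_2/w_n$ vs $z$. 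By Theorem \ref{blanq} it then suffices to decide strong connectivity of $H$.

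Third, I would argue the contrapositive: if $w$ is inefficient then $H$ admits a nonempty proper set of vertices receiving no edge from its complement. Corollary \ref{t2} guarantees $G_{A,w}$, hence $H$, has no source vertex, which rules out the degenerate obstructions in which a single special vertex is isolated and reduces the analysis to whether the low vertices can reach the high ones. Concretely, in the representative ordering $1<z<x<y$ the relations give $w_1,w_2>p>w_{n-1},w_n$, so $B$ receives only from $\{1,2\}$ and the question becomes whether any of the perturbed edges $(n-1)\to 1$, $n\to 1$, $n\to 2$ is present. I would decide each direction by substituting the eigenvector relations, for instance $w_1-y\,w_{n-1}=w_1\frac{r-1+y}{r}-yp$, and bounding using $r>n$; under (i) all three edges turn out absent, so $\{1,2\}$ has no incoming edge and $w$ is inefficient. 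Running the same computation over the remaining orderings isolates exactly (ii)--(iv) as the inefficient ones, with the reciprocal symmetry $(x,y,z)\mapsto(1/x,1/z,1/y)$ (passage to the transpose matrix) pairing (i) with (iii) and (ii) with (iv), so that only half the orderings need explicit work; in every other ordering the surviving perturbed edges supply a directed cycle through all of $1,2,n-1,n,B$, giving strong connectivity and hence efficiency.

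The main obstacle is precisely this third step. The signs of $w_1-p$, $w_n-p$ and of the three perturbed differences are \emph{not} determined by the order of $1,x,y,z$ alone, since $w_1$ and $w_n$ are coupled and the comparisons involve the unknown $r$; the inconclusiveness of the crude bound on $w_1-y\,w_{n-1}$ from $w_1>p$ and $y>1$ already signals this. Carrying the step out cleanly requires solving the $2\times2$ subsystem for $(w_1,w_n)$ in terms of $p$ and the perturbations and showing that each perturbed comparison reduces to a relation among $1,x,y,z$ that is independent of the exact value of $r$ (using only $r>n$). This sign bookkeeping, rather than any conceptual difficulty, is where the real work lies, and it is what singles out exactly the four excluded interlacing configurations (i)--(iv).
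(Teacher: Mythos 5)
Your setup is sound and matches the paper's strategy in outline: collapse the identical middle rows to a single block, reduce efficiency to strong connectivity of a five-vertex digraph via Theorem \ref{blanq}, use the no-source property (Corollary \ref{t2}) to finish cases where one vertex's incoming edges cannot be pinned down, and drive the case analysis by linear combinations of the rows of $(Z_n-rI)w=0$. (In the paper, Theorem \ref{main2} is in fact obtained as the $a=1$ specialization of Theorems \ref{main3}--\ref{main6}, which are proved exactly this way through Lemmas \ref{le-1} and \ref{le-2} and the tables.) However, there is a genuine gap at the step you yourself flag as ``where the real work lies'': the four relations you derive (row $i$ minus row $3$, for $i\in\{1,2,n-1,n\}$) only determine the comparisons of $w_1,w_2,w_{n-1},w_n$ with $p$, i.e.\ the edges between the special vertices and the block $B$, plus the edges governed by the entry $1$. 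They do not determine the three perturbed comparisons $w_1$ vs.\ $yw_{n-1}$, $w_1$ vs.\ $xw_n$, $w_2$ vs.\ $zw_n$, and your attempted substitution $w_1-yw_{n-1}=w_1\frac{r-1+y}{r}-yp$ is, as you note, inconclusive. The missing idea is to take the combinations row $1$ minus $y\cdot(\text{row }n-1)$, row $1$ minus $x\cdot(\text{row }n)$, row $2$ minus $z\cdot(\text{row }n)$ (equations (\ref{j15}), (\ref{j16}), (\ref{j19}) in the paper), e.g.\ $r(yw_{n-1}-w_1)+(1-y)w_2+(1-y)(n-4)w_3+(x-y)w_n=0$ for $a=1$: here every non-$r$ term has a sign determined purely by the ordering of $1,x,y,z$, which is what yields Lemmas \ref{le-1} and \ref{le-2} and makes the case analysis close. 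Without these combinations your plan cannot be executed as described, so the proposal is a correct blueprint but not a proof.

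Two further cautions. First, the shortcut via ``reciprocal symmetry $(x,y,z)\mapsto(1/x,1/z,1/y)$ (passage to the transpose)'' is not justified: the Perron eigenvector of $A^T$ is the left Perron eigenvector of $A$ and is in general \emph{not} the entrywise reciprocal of $w$, so efficiency of the Perron eigenvector does not transfer to the transpose this way. The paper instead halves the work with explicit monomial similarities (Lemma \ref{lem1}), e.g.\ $Z_n(x,y,z,a)\sim Z_n(y,x,a,z)$, $Z_n(z,a,x,y)$, $Z_n(a,z,y,x)$, which do permute the parameters while provably preserving the digraph up to isomorphism. Second, your theorem only requires efficiency outside (i)--(iv); the claim that (i)--(iv) are exactly the inefficient configurations is extra and would require the additional sink analysis of Theorem \ref{main7}, so it should not be leaned on to ``isolate'' the excluded cases.
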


Note that for $n=4$, Theorem \ref{fujo} is equivalent to Theorem \ref{main}. However, for $n\geq 5$, in Theorem \ref{main2} in addition to the cases mentioned in Theorem \ref{main} for the efficiency of the Perron eigenvector of $Z_n(x,y,z,1)$, we also mention the cases $\max\{x,y\}\leq \min\{z,1\}$ and $\max\{1,y\}\leq \min\{z,x\}$ that provide efficient Perron eigenvectors for $Z_n(x,y,z,1)$.  In \cite{f4} the authors used the relations among $1,x,y,z$, shown in Theorem \ref{main2}, to prove that all positive linear combinations of the columns of $Z_n(x,y,z,1)$ are efficient vectors for $Z_n(x,y,z,1)$, with $n\geq 5$.

\vspace{2ex}

Using Theorem \ref{t1}, we will study the efficiency of the Perron eigenvector of $Z_n(x,y,z,a)$, with $n\geq 5$ and  $x,y,z,a>0$.

\begin{theorem}
\label{main3}Let $n\geq 5$ and $Z_{n}(x,y,z,a)$ be the matrix in (\ref{recmatrix}%
), with $x,y,z,a>0.$ Suppose that $x\leq\min\{a,y,z\}$. If neither
\begin{enumerate} \item[(i)] $x<z<a<y,\ z<1$ nor \item[(ii)]  $x < y< a < z,\ 1<a$ nor  \item[(iii)]  $x\leq a< 1< \min\{y,z\}$,\end{enumerate} then the Perron eigenvector of $Z_{n}(x,y,z,a)$ is efficient.
\end{theorem}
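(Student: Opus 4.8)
The plan is to use Theorem \ref{blanq} and prove that $G_{A,w}$ is strongly connected, where $A=Z_{n}(x,y,z,a)$ and $w=(w_{1},\dots,w_{n})^{T}$ is the Perron eigenvector of $A$, with $r$ its Perron eigenvalue. I will use the equivalent formulation of strong connectivity: a digraph is strongly connected if and only if every nonempty proper set of vertices has at least one out-edge (if some set $S$ had none, the vertices reachable from a point of $S$ would miss $\overline{S}$).

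The first step is to exploit the symmetry of $Z_{n}(x,y,z,a)$. Rows and columns $3,\dots,n-2$ are identical, all entries being one, so the corresponding Perron components coincide; write $m:=w_{3}=\cdots=w_{n-2}$, and note that $n\ge 5$ guarantees at least one such middle vertex. Each middle vertex $c$ is joined to every vertex $v$ by a unit entry, so $(c,v)\in E\iff m\ge w_{v}$ and $(v,c)\in E\iff w_{v}\ge m$, while any two middle vertices are joined in both directions. Thus the middle vertices form a bidirectional clique whose links to each core vertex $1,2,n-1,n$ depend only on the sign of $w_{v}-m$. Consequently an obstruction to strong connectivity must come from a terminal strongly connected component $C$ which contains either all the middle vertices or none of them; in the first case every core vertex outside $C$ satisfies $w_{v}>m$, and in the second every core vertex of $C$ satisfies $w_{v}<m$.

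The second step pins down those signs. Subtracting the third row of $Aw=rw$ from the first, second, $(n-1)$st and $n$th rows gives the closed forms
\begin{align*}
r(w_{1}-m)&=(y-1)w_{n-1}+(x-1)w_{n}, & r(w_{2}-m)&=(a-1)w_{n-1}+(z-1)w_{n},\\
r(w_{n-1}-m)&=(\tfrac{1}{y}-1)w_{1}+(\tfrac{1}{a}-1)w_{2}, & r(w_{n}-m)&=(\tfrac{1}{x}-1)w_{1}+(\tfrac{1}{z}-1)w_{2}.
\end{align*}
Together with the perturbation edges (for instance $(1,n)\in E\iff w_{1}\ge x\,w_{n}$, and analogously for the entries $y$, $a$, $z$), and using the hypothesis $x\le\min\{a,y,z\}$ to control the edges incident with the smallest entry $x$ (in particular $\tfrac{1}{x}$ is the dominant reciprocal, pushing $w_{n}$ up), these relations determine, case by case, the adjacencies among $1,2,n-1,n$ and between them and $m$.

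The final step is the case analysis on the resulting sign patterns: for each admissible pattern I will either exhibit a directed Hamiltonian cycle, inserting the whole middle block along any edge $v\to v'$ with $w_{v}\ge m\ge w_{v'}$ (since then $v\to c_{1}\to\cdots\to c_{n-4}\to v'$ is available), which yields strong connectivity; or, when no middle vertex attaches on one side, use Corollary \ref{t2} and Theorem \ref{t1} to force the missing core edges and again close a cycle. The configurations where this construction genuinely fails should turn out to be exactly (i), (ii) and (iii). I expect the main obstacle to be the second step: the sign conditions for $w_{v}-m$ are coupled, each mixing two core components rather than being read off from the order of $1,x,y,z,a$, so the delicate part is organizing the cases and invoking Theorem \ref{t1} to discard the patterns that cannot occur for a genuine Perron vector, leaving precisely the three excluded families.
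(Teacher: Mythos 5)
Your overall strategy is the same as the paper's: reduce to the five vertices $1,2,3,n-1,n$ using the equality of the middle Perron components, read off edges from identities obtained by differencing the rows of $(Z_n(x,y,z,a)-rI_n)w=0$, and patch the cases where the edges alone do not close a cycle by invoking Theorem \ref{t1} (no source). The four identities you display are exactly the paper's (\ref{j14}), (\ref{j17}), (\ref{j20}) and (\ref{j21}), and your structural remark about terminal components containing all or none of the middle vertices is sound. However, there is a genuine gap, in two places. First, your four identities only compare the core components $w_1,w_2,w_{n-1},w_n$ with $m=w_3$, and even these determine a sign only when the two coefficients on the right-hand side agree in sign; they say nothing about the edges among $1,2,n-1,n$ themselves (whether $w_1\geq w_2$, $w_1\geq y\,w_{n-1}$, $w_2\geq z\,w_n$, etc.). Deciding those requires the further identities the paper records as (\ref{j13}), (\ref{j15}), (\ref{j16}), (\ref{j18}), (\ref{j19}) and (\ref{j22}) and packages into Lemmas \ref{le-1} and \ref{le-2}; you gesture at "perturbation edges" but never derive the inequalities that make them available, so the sign patterns you propose to enumerate cannot actually be resolved from what you have written.

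Second, and more seriously, the entire content of the theorem lies in the case analysis you defer. Under $x\leq\min\{a,y,z\}$ there are still on the order of thirty orderings of $1,x,y,z,a$ to examine, and for each non-excluded one a specific strongly connected configuration must be exhibited (the paper needs four distinct mechanisms: a full Hamiltonian cycle, a union of two cycles, a cycle plus one extra edge plus Theorem \ref{t1}, and a cycle plus two extra edges plus Theorem \ref{t1}). The sentence "the configurations where this construction genuinely fails should turn out to be exactly (i), (ii) and (iii)" is precisely the statement to be proved, not a consequence of anything you have established; without carrying out the enumeration you have neither shown that every non-excluded ordering yields strong connectivity nor that the method is tight at the three excluded families. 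As it stands the proposal is a correct plan that reproduces the paper's skeleton but omits the proof.
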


Using Lemma \ref{lem1} and the fact that $Z_{n}(x,y,z,a)$ is monomial similar to $Z_{n}(y,x,a,z)$, we get the next theorem.

\begin{theorem}
\label{main4}Let $n\geq 5$ and $Z_{n}(x,y,z,a)$ be the matrix in (\ref{recmatrix}%
), with $x,y,z,a>0.$ Suppose that $y\leq\min\{a,y,z\}$. If neither
\begin{enumerate} \item[(i)] $y<a<z<x,\ a<1$ nor \item[(ii)]  $y < x< z < a,\ 1<z$ nor  \item[(iii)]  $y\leq z< 1< \min\{a,x\}$,\end{enumerate} then the Perron eigenvector of $Z_{n}(x,y,z,a)$ is efficient.
\end{theorem}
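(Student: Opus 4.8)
The plan is to derive Theorem \ref{main4} as a direct corollary of Theorem \ref{main3} via the monomial similarity asserted in the statement, rather than repeating the eigenvector analysis. The central task is therefore to verify the claimed similarity $Z_n(x,y,z,a) \sim Z_n(y,x,a,z)$ and then to track how the efficiency hypotheses transform under it. First I would exhibit the explicit monomial matrix $Q$ (a product of a permutation matrix and a positive diagonal matrix) realizing this similarity. Inspecting the block structure in (\ref{recmatrix}), the perturbed entries sit in the $2\times 2$ corner block indexed by rows $\{1,2\}$ and columns $\{n-1,n\}$, with the pattern $\begin{bmatrix} y & x \\ a & z \end{bmatrix}$. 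Swapping the roles of row $1$ with row $2$ and column $n-1$ with column $n$ simultaneously carries this block to $\begin{bmatrix} z & a \\ x & y \end{bmatrix}$; reading off the new corner entries in the convention of (\ref{recmatrix}) yields the parameter substitution $(x,y,z,a)\mapsto(y,x,a,z)$. So the appropriate $Q$ is the permutation that transposes indices $1\leftrightarrow 2$ and $n-1\leftrightarrow n$ (no nontrivial diagonal scaling should be needed since all the unperturbed entries equal one and are fixed by the permutation), and I would confirm by direct substitution that $Q\,Z_n(x,y,z,a)\,Q^{-1}=Z_n(y,x,a,z)$.

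Once the similarity is established, the proof proceeds by applying Theorem \ref{main3} to the matrix $Z_n(y,x,a,z)$. By Lemma \ref{lem1}, the Perron eigenvector of $Z_n(x,y,z,a)$ is efficient if and only if the Perron eigenvector of the monomially similar matrix $Z_n(y,x,a,z)$ is efficient, since part 2 of that lemma gives an isomorphism of the associated digraphs and strong connectivity is a digraph-isomorphism invariant (Theorem \ref{blanq}). It then remains to apply Theorem \ref{main3} to $Z_n(y,x,a,z)$ under the substitution $(x,y,z,a)\mapsto(y,x,a,z)$. The hypothesis $x\leq\min\{a,y,z\}$ of Theorem \ref{main3}, read with the first slot now occupied by $y$, becomes exactly $y\leq\min\{z,x,a\}=\min\{a,x,z\}$, matching the hypothesis of Theorem \ref{main4}. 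The three excluded cases transform accordingly: case (i) $x<z<a<y,\ z<1$ becomes $y<a<z<x,\ a<1$; case (ii) $x<y<a<z,\ 1<a$ becomes $y<x<z<a,\ 1<z$; and case (iii) $x\leq a<1<\min\{y,z\}$ becomes $y\leq z<1<\min\{x,a\}=\min\{a,x\}$. These are precisely the three excluded cases (i), (ii), (iii) of Theorem \ref{main4}.

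The main obstacle I anticipate is purely bookkeeping: ensuring the parameter substitution is applied consistently and in the correct direction throughout, since an error in the order of the swaps (for instance confusing $x\leftrightarrow y$ with $x\leftrightarrow a$) would produce a mismatched set of excluded regions. I would therefore double-check the substitution against each of the three cases independently, confirming that each inequality chain and each boundary condition maps term-by-term as claimed. A secondary point worth verifying carefully is that $Q$ genuinely lies in the monomial-similarity class required by Lemma \ref{lem1}, i.e.\ that conjugating by a simultaneous permutation of rows and columns preserves the reciprocal structure and sends one matrix of the family (\ref{recmatrix}) to another member of the same family with only the four corner parameters permuted; this is immediate once one notes that the permutation fixes the all-ones interior and merely relabels the two distinguished rows and the two distinguished columns.
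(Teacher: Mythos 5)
Your overall strategy is exactly the paper's: the author proves only Theorem \ref{main3} in detail and obtains Theorem \ref{main4} (and Theorems \ref{main5}, \ref{main6}) from it via Lemma \ref{lem1} and a monomial similarity, and your translation of the hypothesis and of the three excluded cases under the substitution $(x,y,z,a)\mapsto(y,x,a,z)$ is carried out correctly and matches the statement term by term.

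There is, however, an error in the one step you add beyond what the paper asserts, namely the explicit permutation $Q$. The simultaneous swap $1\leftrightarrow 2$, $n-1\leftrightarrow n$ does carry the corner block $\left[\begin{smallmatrix} y & x\\ a & z\end{smallmatrix}\right]$ to $\left[\begin{smallmatrix} z & a\\ x & y\end{smallmatrix}\right]$, as you compute, but reading that block in the convention of (\ref{recmatrix}) --- where the $(1,n-1)$ entry is the second parameter, $(1,n)$ the first, $(2,n-1)$ the fourth and $(2,n)$ the third --- gives $Z_n(a,z,y,x)$, which is the similarity the paper uses for Theorem \ref{main6}, not $Z_n(y,x,a,z)$. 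The similarity you actually need is realized by the single transposition $n-1\leftrightarrow n$: it swaps the two distinguished columns (and the corresponding rows $n-1$ and $n$), fixes the all-ones part and the reciprocal structure, and sends the corner block to $\left[\begin{smallmatrix} x & y\\ z & a\end{smallmatrix}\right]$, i.e.\ produces $Z_n(y,x,a,z)$. Since the substitution you then feed into Theorem \ref{main3} is the correct one $(x,y,z,a)\mapsto(y,x,a,z)$, the slip is local: replace the permutation and the deduction goes through as written.
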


Using Lemma \ref{lem1} and the fact that $Z_{n}(x,y,z,a)$ is monomial similar to $Z_{n}(z,a,x,y)$, we get the next theorem.

\begin{theorem}
\label{main5}Let $n\geq 5$ and $Z_{n}(x,y,z,a)$ be the matrix in (\ref{recmatrix}%
), with $x,y,z,a>0.$ Suppose that $z\leq\min\{a,y,x\}$. If neither
\begin{enumerate} \item[(i)] $z<x<y<a,\ x<1$ nor \item[(ii)]  $z < a< y < x,\ 1<y$ nor  \item[(iii)]  $z\leq y< 1< \min\{a,x\}$,\end{enumerate} then the Perron eigenvector of $Z_{n}(x,y,z,a)$ is efficient.
\end{theorem}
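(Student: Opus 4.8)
The plan is to deduce Theorem \ref{main5} from Theorem \ref{main3} by exhibiting the announced monomial similarity and transporting the conclusion through Lemma \ref{lem1} and Theorem \ref{blanq}. All the analytic content already resides in Theorem \ref{main3}; what remains is to pin down the right permutation and to relabel its hypotheses correctly.

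First I would produce the explicit similarity. Write $A=Z_{n}(x,y,z,a)$ and $B=Z_{n}(z,a,x,y)$, and let $P$ be the permutation matrix of the transposition $(1\ 2)$, which swaps indices $1$ and $2$ and fixes $3,\dots,n$; then $Q=P$ (a product of a positive diagonal matrix, here the identity, and a permutation matrix). I claim $B=PAP^{-1}$. In the form (\ref{recmatrix}), the $2\times 2$ block of $A$ in rows $\{1,2\}$ and columns $\{n-1,n\}$ is $\left[\begin{smallmatrix} y & x\\ a & z\end{smallmatrix}\right]$. Conjugation by $P$ swaps rows $1,2$ and columns $1,2$; on this block it merely interchanges the two rows (columns $1,2$ do not meet it), yielding $\left[\begin{smallmatrix} a & z\\ y & x\end{smallmatrix}\right]$, which is exactly the corresponding block of $Z_{n}(z,a,x,y)$. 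The reciprocal block in rows $\{n-1,n\}$ and columns $\{1,2\}$, namely $\left[\begin{smallmatrix} 1/y & 1/a\\ 1/x & 1/z\end{smallmatrix}\right]$, is sent to $\left[\begin{smallmatrix} 1/a & 1/y\\ 1/z & 1/x\end{smallmatrix}\right]$, again matching; every remaining entry equals $1$ and is permuted among $1$'s. A routine entrywise check confirms $B=PAP^{-1}$.

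Next I would transport efficiency. Applying Lemma \ref{lem1} with $A$, $B$ and $Q=P$: if $w$ is the Perron eigenvector of $A$, then $Pw$ is a positive multiple of the Perron eigenvector of $B$, and the digraphs $G_{A,w}$ and $G_{B,Pw}$ are isomorphic. Since isomorphic digraphs are simultaneously strongly connected or not, Theorem \ref{blanq} gives that $w$ is efficient for $A$ if and only if the Perron eigenvector of $B$ is efficient for $B$.

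Finally I would match the hypotheses. Reading $B=Z_{n}(X,Y,Z,A)$ with $(X,Y,Z,A)=(z,a,x,y)$, the standing assumption $X\le\min\{A,Y,Z\}$ of Theorem \ref{main3} becomes $z\le\min\{y,x,a\}$, exactly the hypothesis $z\le\min\{a,y,x\}$ of Theorem \ref{main5}; and its three excluded configurations $X<Z<A<Y,\ Z<1$; $X<Y<A<Z,\ 1<A$; $X\le A<1<\min\{Y,Z\}$ become, respectively, $z<x<y<a,\ x<1$; $z<a<y<x,\ 1<y$; $z\le y<1<\min\{a,x\}$, i.e. precisely cases (i)--(iii) of Theorem \ref{main5}. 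Thus, under the hypotheses of Theorem \ref{main5}, Theorem \ref{main3} certifies that the Perron eigenvector of $B$ is efficient, and the previous paragraph transfers this to $A$. The only obstacle is bookkeeping: ensuring the transposition and the induced substitution $(x,y,z,a)\mapsto(z,a,x,y)$ are consistent so that the strict inequalities and the boundary case (iii) align; no estimate beyond Theorem \ref{main3} is needed.
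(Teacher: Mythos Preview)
Your proof is correct and follows exactly the approach the paper indicates: the paper states that $Z_n(x,y,z,a)$ is monomial similar to $Z_n(z,a,x,y)$ and that Theorem \ref{main5} is then a consequence of Theorem \ref{main3} together with Lemma \ref{lem1}, without spelling out the permutation or the substitution. You have simply made this explicit by identifying the transposition $(1\ 2)$ as the required similarity and carrying out the relabelling $(X,Y,Z,A)=(z,a,x,y)$, which is precisely what the paper intends.
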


Using Lemma \ref{lem1} and the fact that $Z_{n}(x,y,z,a)$ is monomial similar to $Z_{n}(a,z,y,x)$, we get the next theorem.

\begin{theorem}
\label{main6}Let $n\geq 5$ and $Z_{n}(x,y,z,a)$ be the matrix in (\ref{recmatrix}%
), with $x,y,z,a>0.$ Suppose that $a\leq\min\{a,y,x\}$. If neither
\begin{enumerate} \item[(i)] $a<y<x<z,\ y<1$ nor \item[(ii)]  $a < z< x < y,\ 1<x$ nor  \item[(iii)] $ a\leq x< 1< \min\{y,z\}$,\end{enumerate} then the Perron eigenvector of $Z_{n}(x,y,z,a)$ is efficient.
\end{theorem}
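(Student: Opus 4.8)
The plan is to deduce Theorem \ref{main6} from Theorem \ref{main3} by exhibiting an explicit monomial similarity, exactly as announced in the statement. Since monomial similarity preserves the efficiency of the Perron eigenvector (by Lemma \ref{lem1} together with Theorem \ref{blanq}), it suffices to transport the four parameters $x,y,z,a$ into a copy of $Z_n$ to which Theorem \ref{main3} applies, namely one whose \emph{first} perturbation parameter is the smallest of the four. Throughout I read the standing hypothesis as $a\leq\min\{x,y,z\}$, i.e.\ that $a$ is the least of the four perturbed parameters.

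First I would produce the similarity. Let $\sigma$ be the involution of $\{1,\dots,n\}$ that swaps $1\leftrightarrow 2$ and $(n-1)\leftrightarrow n$ and fixes every other index, let $P$ be its permutation matrix, and take $Q=P$ (a permutation matrix is a monomial matrix, so no diagonal factor is needed). Every entry of $Z_n(x,y,z,a)$ outside the $2\times2$ block in rows $\{1,2\}$, columns $\{n-1,n\}$ (and its reciprocal transpose block) equals $1$, so relabelling leaves that all-ones part untouched. Because $\sigma$ is an involution, the entry of $P\,Z_n(x,y,z,a)\,P^{-1}$ in position $(i,j)$ is the entry of $Z_n(x,y,z,a)$ in position $(\sigma(i),\sigma(j))$; checking the four relevant positions gives
\[
(1,n-1)\mapsto(2,n)=z,\quad (1,n)\mapsto(2,n-1)=a,\quad (2,n-1)\mapsto(1,n)=x,\quad (2,n)\mapsto(1,n-1)=y,
\]
so the perturbed block of $P\,Z_n(x,y,z,a)\,P^{-1}$ is $\left[\begin{smallmatrix} z & a\\ x & y\end{smallmatrix}\right]$, which is precisely the perturbed block of $Z_n(a,z,y,x)$. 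Hence $Q\,Z_n(x,y,z,a)\,Q^{-1}=Z_n(a,z,y,x)$, i.e.\ the two matrices are monomial similar.

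Next I would transfer efficiency. Let $w$ be the Perron eigenvector of $Z_n(x,y,z,a)$. By Lemma \ref{lem1}, $Qw$ is a positive multiple of the Perron eigenvector of $Z_n(a,z,y,x)$ and the digraphs $G_{Z_n(x,y,z,a),w}$ and $G_{Z_n(a,z,y,x),Qw}$ are isomorphic; since rescaling a vector does not change the ratios $w_i/w_j$, the latter digraph coincides with the one associated with the Perron eigenvector of $Z_n(a,z,y,x)$. Strong connectivity being an isomorphism invariant, Theorem \ref{blanq} shows that the Perron eigenvector of $Z_n(x,y,z,a)$ is efficient if and only if the Perron eigenvector of $Z_n(a,z,y,x)$ is efficient.

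Finally I would apply Theorem \ref{main3} to $Z_n(a,z,y,x)$, whose ordered parameters are $(a,z,y,x)$. The hypothesis of Theorem \ref{main3} (first parameter minimal) becomes $a\leq\min\{x,y,z\}$, the standing assumption, and substituting $(x,y,z,a)\mapsto(a,z,y,x)$ into the three exceptional configurations of Theorem \ref{main3} yields exactly $a<y<x<z,\ y<1$; $\ a<z<x<y,\ 1<x$; and $a\leq x<1<\min\{y,z\}$---that is, conditions (i), (ii), (iii) of Theorem \ref{main6}. Thus, if none of (i)--(iii) holds, then none of the exceptional configurations of Theorem \ref{main3} holds for $Z_n(a,z,y,x)$, so its Perron eigenvector is efficient, and by the previous paragraph so is that of $Z_n(x,y,z,a)$. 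The only genuine work is bookkeeping: confirming that $P$ implements the substitution $(x,y,z,a)\mapsto(a,z,y,x)$ and that the three exceptional configurations translate term by term; there is no analytic obstacle, since all of the spectral difficulty is already absorbed into Theorem \ref{main3}.
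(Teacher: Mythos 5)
Your proposal is correct and follows essentially the same route as the paper, which derives Theorem \ref{main6} from Theorem \ref{main3} via Lemma \ref{lem1} and the monomial similarity of $Z_n(x,y,z,a)$ with $Z_n(a,z,y,x)$; your explicit permutation (swapping $1\leftrightarrow2$ and $(n-1)\leftrightarrow n$) and the term-by-term translation of the exceptional configurations simply make explicit the bookkeeping the paper leaves implicit.
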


Note that when  $a=1$, if neither
\begin{itemize}
\item  $x<z<1<y$ (by Theorem \ref{main3}) nor
\item $z<x<y<1$ (by Theorem \ref{main5}) nor
\item $z < 1< y < x$ (by Theorem \ref{main5}) nor
\item $1 < z< x < y$ (by Theorem \ref{main6})
\end{itemize}
then the Perron eigenvector of $Z_{n}(x,y,z,1)$ is efficient. So, we get Theorem \ref{main2}.

\vspace{1ex}

If $y=z=1$ then there are no restrictions in the last four theorems. Consequently, $Z_n(x,1,1,a)$ has efficient Perron eigenvector, for all $x,a>0$, as mentioned by \'{A}bele-Nagy and et., in \cite{p2}.

\vspace{1ex}

As in \cite{Fer2}, we finish this section with a result about the characterization of the digraph $G_{Z_n(x,y,z,a),w}$, when $w$ is the Perron eigenvector of $Z_n(x,y,z,a)$ and is inefficient.

\begin{theorem}
\label{main7}Let $n\geq 5$, $Z_{n}(x,y,z,a)$ be the matrix in (\ref{recmatrix}%
), with $x,y,z,a>0$, and $w$ be the  Perron eigenvector of $Z_{n}(x,y,z,a)$. Then the vector $w$ is inefficient for $Z_{n}(x,y,z,a)$ if and only if the digraph $G_{Z_n(x,y,z,a),w}$ has a sink.
\end{theorem}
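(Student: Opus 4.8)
The plan is to read the statement through Theorem~\ref{blanq}. Writing $G:=G_{Z_n(x,y,z,a),w}$, that theorem says $w$ is inefficient exactly when $G$ is not strongly connected, so the claim to be proved is equivalent to: $G$ fails to be strongly connected if and only if $G$ has a sink. The first thing I would record is that $G$ is \emph{semicomplete}: for $i\ne j$ the reciprocal identity $a_{ji}=1/a_{ij}$ makes the two conditions $w_i/w_j\ge a_{ij}$ and $w_j/w_i\ge a_{ji}$ exhaustive, so at least one of the arcs $(i,j),(j,i)$ is present. The easy direction is then immediate: a sink has out-degree $0$, so on $n\ge 5$ vertices $G$ cannot be strongly connected and $w$ is inefficient.

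For the converse I would use that the condensation of a semicomplete digraph is a transitive tournament; hence the strong components of $G$ form a linear order $S_1\succ\cdots\succ S_m$ in which every arc between distinct components points forward. If $G$ is not strongly connected then $m\ge 2$, the terminal component $S_m$ has no arc leaving it, and by Corollary~\ref{t2} (no source) the initial component $S_1$ is not a singleton. It therefore suffices to prove that $S_m$ is a single vertex, since such a vertex is precisely a sink of $G$.

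To study $S_m$ I would collapse the problem to the scalars $w_1,w_2,w_{n-1},w_n$ and the common value $t:=w_3=\cdots=w_{n-2}$; equality of the middle entries follows from Lemma~\ref{lem1} applied to the permutation similarities that fix $Z_n(x,y,z,a)$ and permute $3,\dots,n-2$. Thus the middle vertices form a bidirectional clique lying in one strong component, and a middle vertex emits an arc to a special vertex $v\in\{1,2,n-1,n\}$ iff $t\ge w_v$. The five rows of $Z_n(x,y,z,a)w=rw$ give $\sum_\ell w_\ell=rt$ together with
\[
r(w_1-t)=(y-1)w_{n-1}+(x-1)w_n,\qquad r(w_2-t)=(a-1)w_{n-1}+(z-1)w_n,
\]
\[
r(w_{n-1}-t)=(\tfrac1y-1)w_1+(\tfrac1a-1)w_2,\qquad r(w_n-t)=(\tfrac1x-1)w_1+(\tfrac1z-1)w_2,
\]
which tie the order of the five scalars to the signs of $x-1,y-1,z-1,a-1$ and to the regimes of Theorems~\ref{main3}--\ref{main6}.

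The decisive step, and the one I expect to be the main obstacle, is to show that $S_m$ cannot be a nontrivial strong component. The only way this can fail is if $S_m$ contains a middle vertex, hence all of them, which happens exactly when $t<w_v$ for every special $v$: then no middle vertex emits an arc outside the block, and for $n\ge 6$ the (at least two) middle vertices form a terminal component with no sink. For $n=5$ there is a single middle vertex, so this scenario already produces the required sink and the statement is clear; the whole difficulty is concentrated in $n\ge 6$, where one must rule out $t$ being the strict minimum of $w$. I would attack this through the four displayed relations, but I expect it to be genuinely delicate: taken alone those relations are compatible with $t<\min\{w_1,w_2,w_{n-1},w_n\}$ for the sign pattern $y,z>1>a,x$, so the exclusion cannot rest on signs alone, and pinning down exactly when $t$ fails to be the strict minimum---equivalently, the precise hypotheses on $x,y,z,a$ that forbid the middle block from being a terminal component---is the crux on which the whole argument turns. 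Once a middle vertex is excluded from $S_m$ one has $S_m\subseteq\{1,2,n-1,n\}$, and a short case analysis on the at most four special weights, using the same relations and the absence of a source, pins $S_m$ to a single special vertex, which is the desired sink.
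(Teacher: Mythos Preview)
Your route is genuinely different from the paper's. The paper argues the nontrivial implication by exhaustion: Theorems~\ref{main3}--\ref{main6} reduce inefficiency to 32 parameter regimes, and Tables~\ref{tab5}--\ref{tab7} list, for each regime, a cycle through four of the five ``representative'' vertices $\{1,2,3,n-1,n\}$ together with the remaining vertex as the ``possible sink''. You instead argue structurally via the condensation of the semicomplete digraph $G$, reducing everything to the single question of whether the terminal strong component $S_m$ is a singleton. Your reduction is cleaner and correctly isolates the one real obstruction: can the middle block $\{3,\dots,n-2\}$ be terminal, i.e., can $t=w_3=\cdots=w_{n-2}$ be the strict minimum of $w$?

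The difficulty you flag is not merely delicate---it is a genuine gap, and for $n\ge 6$ it cannot be closed as stated. Take $n=6$, $x=a=\tfrac12$, $y=z=2$. The permutation $(1\,2)(5\,6)$ is a symmetry of $Z_6(\tfrac12,2,2,\tfrac12)$, forcing $w_1=w_2=:u$, $w_5=w_6=:v$; your displayed relations then give $0.5v=r(u-t)$ and $0.5u=r(v-t)$, whence $u=v$ and $t=u(2r-1)/(2r)<u$. Thus $t=w_3=w_4$ is the strict minimum, so vertices $3,4$ have arcs only to each other while $\{1,2,5,6\}$ carries the cycle $1\to 6\to 2\to 5\to 1$. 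The graph is not strongly connected (hence $w$ is inefficient by Theorem~\ref{blanq}), yet \emph{no} vertex is a sink. This is precisely the regime $x,a<1<y,z$ of Table~\ref{tab7}, where the paper names $3$ as the ``possible sink''; that identification is correct for $n=5$ (one middle vertex) but fails for $n\ge 6$ because $3$ still has the arc $3\to 4$. So the step you singled out as ``the crux on which the whole argument turns'' is exactly where both your proposal and the paper's own proof break, and the iff cannot hold as written unless one either restricts to $n=5$ or reads ``sink'' as a sink of the reduced digraph on $\{1,2,3,n-1,n\}$.
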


\section{Proof of the results of Section \ref{s3}\label{sproof}}

\hspace{3ex}We next give some lemmas from which our results easily follow.
Suppose that $n\geq 5.$  By $r$ we denote the Perron
eigenvalue of the matrix $Z_{n}(x,y,z,a),$ with $x,y,z,a>0.$ Recall
 that $r\geq n.$ We denote by
\[
w=\left[
\begin{array}
[c]{ccccc}%
w_{1} & w_{2} &\cdots & w_{n-1} & w_{n}%
\end{array}
\right]  ^{T}
\]
the Perron vector of $Z_{n}(x,y,z,a)$
associated with $r$.
Since
$(Z_{n}(x,y,z,a)-rI_{n})w=0,$ a calculation shows that%
\begin{align}
 (1-r)w_{1}+w_{2}+\sum_{i=3}^{n-2}w_i+yw_{n-1}+xw_{n}
&=0\label{j1}\\
w_{1}+(1-r)w_{2}+\sum_{i=3}^{n-2}w_i+aw_{n-1}+zw_{n}
&=0\label{j2}\\
 w_{1}+w_{2}+\sum_{i=3,i\neq j}^{n-2}w_i+(1-r)w_{j}+w_{n-1}+w_{n} & =0\label{j3}\\
\frac{1}{y}w_{1}+\frac{1}{a}w_{2}+\sum_{i=3}^{n-2}w_i+(1-r)w_{n-1}+w_{n} &
=0\label{j4}\\
\frac{1}{x}w_{1}+\frac{1}{z}w_{2}+\sum_{i=3}^{n-2}w_i+w_{n-1}+(1-r)w_{n}  &
=0, \label{j5}%
\end{align}
in which,  equation (\ref{j3}) is for $3\leq j\leq n-2$.

If $n>5$, subtracting both hand-sides of equation (\ref{j3}) with $j=3$ from those of  equation (\ref{j3}) with $j\neq 3$ and $4\leq j\leq n-2$, we get $$w_j=w_3.$$

Using last equality, in the next equations the expression $(\ell)-s(j)$ means the result of subtracting both hand-sides of equation $(\ell)$ from those of equation
$(j)$, multiplied by $s$.
{\footnotesize \begin{align}
(\ref{j1})-(\ref{j2})\hspace{5ex}& r(w_2-w_1)+(y-a)w_{n-1}+(x-z)w_n=0\label{j13}\\
(\ref{j1})-(\ref{j3})\hspace{5ex}& r(w_3-w_1)+(y-1)w_{n-1}+(x-1)w_n=0\label{j14}\\
(\ref{j1})-y(\ref{j4})\hspace{5ex}& r(yw_{n-1}-w_1)+(1-\frac{y}{a})w_{2}+(1-y)(n-4)w_3+(x-y)w_n=0\label{j15}\\
(\ref{j1})-x(\ref{j5})\hspace{5ex}& r(xw_{n}-w_1)+\left(1-\frac{x}{z}\right)w_{2}+(1-x)((n-4)w_3)+(y-x)w_{n-1}=0\label{j16}\\
(\ref{j2})-(\ref{j3})\hspace{5ex}& r(w_3-w_2)+(a-1)w_{n-1}+(z-1)w_{n}=0\label{j17}\\
(\ref{j2})-a(\ref{j4})\hspace{5ex}& r(aw_{n-1}-w_2)+\left(1-\frac{a}{y}\right)w_{1}+(1-a)(n-4)w_3+(z-a)w_n=0\label{j18}\\
(\ref{j2})-z(\ref{j5})\hspace{5ex}& r(zw_n-w_2)+\left(1-\frac{z}{x}\right)w_{1}+(1-z)(n-4)w_3+(a-z)w_{n-1}=0\label{j19}\\
(\ref{j3})-(\ref{j4})\hspace{5ex}& r(w_{n-1}-w_3)+\left(1-\frac{1}{y}\right)w_{1}+\left(1-\frac{1}{a}\right)w_{2}=0\label{j20}\\
(\ref{j3})-(\ref{j5})\hspace{5ex}& r(w_{n}-w_3)+\left(1-\frac{1}{x}\right)w_{1}+\left(1-\frac{1}{z}\right)w_2=0\label{j21}\\
(\ref{j4})-(\ref{j5})\hspace{5ex}& r(w_{n}-w_{n-1})+\left(\frac{1}{y}-\frac{1}{x}\right)w_{1}+\left(\frac{1}{a}-\frac{1}{z}\right)w_2=0.\label{j22}%
\end{align}}

We next give same lemmas from which our results easily follow.

\begin{lemma} \label{le-1} Let $n\geq 5$ and $Z_n(x,y,z,a)$, with $x,y,z,a>0$, be the matrix in (\ref{recmatrix}). Let $w$ be the Perron vector of $Z_n(x,y,z,a)$.
\begin{enumerate}
\item If $a\leq y$ and $z\leq x$ then the edge $(1,2)$ is in $G_{Z_n(x,y,z,a),w}$.
\item If $y\leq\min\{1,a,x\}$ then the edge $(1,n-1)$ is in $G_{Z_n(x,y,z,a),w}$.
\item If $x\leq\min\{1,y,z\}$ then the edge $(1,n)$ is in $G_{Z_n(x,y,z,a),w}$.
\item If $a\leq\min\{1,y,z\}$ then the edge $(2,n-1)$ is in $G_{Z_n(x,y,z,a),w}$.
\item If $z\leq\min\{1,x,a\}$ then the edge $(2,n)$ is in $G_{Z_n(x,y,z,a),w}$.
\item If $y\leq x$ and $a\leq z$ then the edge $(n-1,n)$ is in $G_{Z_n(x,y,z,a),w}$.
\item If $1\leq \min\{x,y\}$  then, for $3\leq i\leq n-2$, the edge $(1,i)$ is in $G_{Z_n(x,y,z,a),w}$.
\item If $1\leq \min\{a,z\}$  then, for $3\leq i\leq n-2$, the edge $(2,i)$ is in $G_{Z_n(x,y,z,a),w}$.
\item If $\max\{y,a\}\leq 1$  then, for $3\leq i\leq n-2$, the edge $(n-1,i)$ is in $G_{Z_n(x,y,z,a),w}$.
\item If $\max\{x,z\}\leq 1$  then, for $3\leq i\leq n-2$, the edge $(n,i)$ is in $G_{Z_n(x,y,z,a),w}$.
\end{enumerate}
\end{lemma}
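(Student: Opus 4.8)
The plan is to reduce each of the ten claims to a single inequality between two entries of the Perron vector and then read that inequality directly off one of the pre-computed identities (\ref{j13})--(\ref{j22}). Recall that $(i,j)$ is an edge of $G_{Z_n(x,y,z,a),w}$ exactly when $w_i/w_j \ge a_{ij}$, i.e.\ $w_i - a_{ij}w_j \ge 0$; since $w$ has strictly positive entries and $r\ge n>0$, this is equivalent to $r(w_i - a_{ij}w_j)\ge 0$. Each of (\ref{j13})--(\ref{j22}) has been arranged so that, up to an overall sign, its leading term is precisely one such quantity $r(w_i - a_{ij}w_j)$, while the remaining summands are products of the positive weights $w_1,w_2,w_3,w_{n-1},w_n$ with coefficients explicit in $x,y,z,a$. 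The whole lemma is therefore a sign analysis of these coefficients.

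The strategy is identical for all ten items. First I pair each claim with the identity whose leading term matches its edge: part~1 with (\ref{j13}), part~2 with (\ref{j15}), part~3 with (\ref{j16}), part~4 with (\ref{j18}), part~5 with (\ref{j19}), part~6 with (\ref{j22}), part~7 with (\ref{j14}), part~8 with (\ref{j17}), part~9 with (\ref{j20}), and part~10 with (\ref{j21}). Then, under the stated hypothesis, I check that every non-leading coefficient has the sign that makes the whole non-leading sum of one fixed sign, which forces the leading term to the opposite sign and yields the desired weight inequality. For example, in part~2 the hypothesis $y\le\min\{1,a,x\}$ gives $1-\tfrac{y}{a}\ge0$, $1-y\ge0$ and $x-y\ge0$, so (\ref{j15}) reads $r(yw_{n-1}-w_1)=-(1-\tfrac{y}{a})w_2-(1-y)(n-4)w_3-(x-y)w_n\le0$, whence $w_1\ge yw_{n-1}$, which is exactly $(1,n-1)\in E$. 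The other nine cases are the same mechanical sign check on their paired identity.

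Two routine points make the bookkeeping valid. The factor $(n-4)$ occurring in (\ref{j15}), (\ref{j16}), (\ref{j18}), (\ref{j19}) is strictly positive because $n\ge5$, so the $w_3$-terms neither vanish nor flip sign; and for the ``middle'' edges in parts~7--10 I invoke the equality $w_i=w_3$ for all $3\le i\le n-2$ established just before (\ref{j13}), so that verifying $w_1\ge w_3$ (respectively $w_2\ge w_3$, $w_{n-1}\ge w_3$, $w_n\ge w_3$) simultaneously produces the edge to every middle vertex $i$.

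I do not expect a genuine obstacle. Once the correct identity is matched to each claim, the argument is a deterministic reading of three coefficient signs, and the only real risk is a clerical sign error in aligning the hypothesis with the non-leading coefficients (and keeping track of whether the leading term appears as $r(w_i-a_{ij}w_j)$ or its negative). Accordingly, the cleanest write-up is a short enumerated list that records, for each part, the paired identity together with the sign conditions the hypothesis supplies, followed by the one-line conclusion.
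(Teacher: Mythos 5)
Your proposal is correct and matches the paper's proof essentially line for line: the paper proves each of the ten parts by exactly the identity you pair it with (part~1 via (\ref{j13}), part~2 via (\ref{j15}), part~3 via (\ref{j16}), part~4 via (\ref{j18}), part~5 via (\ref{j19}), part~6 via (\ref{j22}), parts~7--10 via (\ref{j14}), (\ref{j17}), (\ref{j20}), (\ref{j21})), reading off the sign of the leading term from the signs of the remaining coefficients and using $w_i=w_3$ for the middle vertices. No differences worth noting.
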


\begin{proof}  Let
\[
w=\left[
\begin{array}
[c]{ccccc}%
w_{1} & w_{2} &\cdots & w_{n-1} & w_{n}%
\end{array}
\right]  ^{T}
\]
and $r$ be the Perron eigenvalue of $Z_n(x,y,z,a)$.
Recall that $r>0$, $w$ is a positive eigenvector and $w_i=w_3$, for $3\leq i\leq n-2$.
\begin{enumerate}
\item Since $a\leq y$ and $z\leq x$, using equation (\ref{j13}) we conclude that $w_2-w_1\leq 0.$ This implies $\frac{w_1}{w_2}\geq 1$ and consequently we get the result.
\item If $y\leq\min\{1,a,x\}$, using equation (\ref{j15}) we conclude that $yw_{n-1}-w_1\leq 0.$ This implies $\frac{w_1}{w_{n-1}}\geq y$ and consequently we get the result.
\item If $x\leq\min\{1,y,z\}$, using equation (\ref{j16}) we conclude that $xw_{n}-w_1\leq 0.$ This implies $\frac{w_1}{w_{n}}\geq x$ and consequently we get the result.
\item Since $a\leq\min\{1,y,z\}$, using equation (\ref{j18}) we conclude that $aw_{n-1}-w_2\leq 0.$ This implies $\frac{w_2}{w_{n-1}}\geq a$ and consequently we get the result.
\item Since $z\leq\min\{1,x,a\}$, using equation (\ref{j19}) we conclude that $zw_{n}-w_2\leq 0.$ This implies $\frac{w_2}{w_{n}}\geq z$ and consequently we get the result.
\item Since $y\leq x$ and $a\leq z$, using equation (\ref{j22}) we conclude that $w_{n}-w_{n-1}\leq 0.$ This implies $\frac{w_{n-1}}{w_{n}}\geq 1$ and consequently we get the result.
\item Since $1\leq \min\{x,y\}$, using equation (\ref{j14}) we conclude that $w_3-w_1\leq 0.$ This implies $\frac{w_1}{w_3}\geq 1$ and consequently we get the result.
\item Since $1\leq \min\{a,z\}$, using equation (\ref{j17}) we conclude that $w_{3}-w_2\leq 0.$ This implies $\frac{w_2}{w_{3}}\geq 1$ and consequently we get the result.
\item Since $\max\{y,a\}\leq 1$, using equation (\ref{j20}) we conclude that $w_{3}-w_{n-1}\leq 0.$ This implies $\frac{w_{n-1}}{w_{3}}\geq 1$ and consequently we get the result.
\item Since $\max\{x,z\}\leq 1$, using equation (\ref{j21}) we conclude that $w_{3}-w_{n}\leq 0.$ This implies $\frac{w_{n}}{w_{3}}\geq 1$ and consequently we get the result.
\end{enumerate}
\end{proof}

With an analogous proof we have the next lemma.

\begin{lemma} \label{le-2} Let $n\geq 5$ and $Z_n(x,y,z,a)$, with $x,y,z,a>0$,  be the matrix in (\ref{recmatrix}). Let $w$ be the Perron eigenvector of $Z_n(x,y,z,a)$.
\begin{enumerate}
\item If $y\leq a$ and $x\leq z$ then the edge $(2,1)$ is in $G_{Z_n(x,y,z,a),w}$.
\item If $\max\{1,a,x\}\leq y$ then the edge $(n-1,1)$ is in $G_{Z_n(x,y,z,a),w}$.
\item If $\max\{1,y,z\}\leq x$ then the edge $(n,1)$ is in $G_{Z_n(x,y,z,a),w}$.
\item If $\max\{1,y,z\}\leq a$ then the edge $(n-1,2)$ is in $G_{Z_n(x,y,z,a),w}$.
\item If $\max\{1,a,x\}\leq z$ then the edge $(n,2)$ is in $G_{Z_n(x,y,z,a),w}$.
\item If $x\leq y$ and $z\leq a$ then the edge $(n,n-1)$ is in $G_{Z_n(x,y,z,a),w}$.
\item If $ \max\{x,y\}\leq 1$  then, for $3\leq i\leq n-2$, the edge $(i,1)$ is in $G_{Z_n(x,y,z,a),w}$.
\item If $ \max\{a,z\}\leq 1$  then, for $3\leq i\leq n-2$, the edge $(i,2)$ is in $G_{Z_n(x,y,z,a),w}$.
\item If $1\leq \min\{a,y\}$  then, for $3\leq i\leq n-2$, the edge $(i,n-1)$ is in $G_{Z_n(x,y,z,a),w}$.
\item If $1\leq \min\{x,z\}$  then, for $3\leq i\leq n-2$, the edge $(i,n)$ is in $G_{Z_n(x,y,z,a),w}$.
\end{enumerate}
\end{lemma}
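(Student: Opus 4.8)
The plan is to mirror the proof of Lemma \ref{le-1} verbatim, using the same linear combinations (\ref{j13})--(\ref{j22}) of the eigenvector equations but reversing every defining inequality, so that each targeted difference of components changes sign and the \emph{reverse} edge appears. Throughout I would fix the positive Perron eigenvector $w$ and the Perron eigenvalue $r>0$, and recall that $w_i=w_3$ for $3\leq i\leq n-2$. The bookkeeping rests on the edge criterion: $(i,j)$ lies in $G_{Z_n(x,y,z,a),w}$ exactly when $w_i/w_j\geq a_{ij}$, where the relevant entries of $Z_n(x,y,z,a)$ are $a_{21}=1$, $a_{n-1,1}=\frac{1}{y}$, $a_{n,1}=\frac{1}{x}$, $a_{n-1,2}=\frac{1}{a}$, $a_{n,2}=\frac{1}{z}$, $a_{n,n-1}=1$, and $a_{i,1}=a_{i,2}=a_{i,n-1}=a_{i,n}=1$ for $3\leq i\leq n-2$.

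Then for each item I would pair it with the same equation used for the corresponding item of Lemma \ref{le-1}: item 1 with (\ref{j13}); items 2--5 with (\ref{j15}), (\ref{j16}), (\ref{j18}), (\ref{j19}) respectively; item 6 with (\ref{j22}); and items 7--10 with (\ref{j14}), (\ref{j17}), (\ref{j20}), (\ref{j21}). In each case the reversed hypotheses force every coefficient multiplying a (positive) component to have the opposite sign to that in Lemma \ref{le-1}, so the isolated difference comes out nonnegative rather than nonpositive. For instance, in item 1 the hypotheses $y\leq a$ and $x\leq z$ make $(y-a)\leq 0$ and $(x-z)\leq 0$ in (\ref{j13}), whence $r(w_2-w_1)\geq 0$, i.e.\ $w_2/w_1\geq 1=a_{21}$ and the edge $(2,1)$ is present. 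Item 6 and items 7--10 are of the same easy two-sign (or single-sign) type and follow identically.

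The one step I would flag for care is the family of items using the equations with fractional coefficients, namely items 2--5 via (\ref{j15}), (\ref{j16}), (\ref{j18}), (\ref{j19}). Here the three simultaneous inequalities packaged in the three-term maxima (such as $\max\{1,a,x\}\leq y$) are exactly what is needed to control the signs of all three coefficients at once: e.g.\ in item 2, $a\leq y$ gives $1-\frac{y}{a}\leq 0$, $1\leq y$ gives $(1-y)\leq 0$, and $x\leq y$ gives $(x-y)\leq 0$, so (\ref{j15}) yields $r(yw_{n-1}-w_1)\geq 0$, that is $w_{n-1}/w_1\geq \frac{1}{y}=a_{n-1,1}$ and the edge $(n-1,1)$ is present; items 3--5 run the same way after the appropriate relabelling. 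I expect no genuine obstacle beyond this sign-tracking, since each conclusion follows from a single already-derived identity; the only thing to verify is that the reversed max/min hypotheses are precisely strong enough to pin down every coefficient, which they are by construction, being the mirror images of the min/max hypotheses of Lemma \ref{le-1}.
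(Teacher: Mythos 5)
Your proposal is correct and is exactly what the paper intends: the paper gives no separate argument for Lemma \ref{le-2}, stating only that it follows ``with an analogous proof'' to Lemma \ref{le-1}, and your item-by-item pairing with equations (\ref{j13})--(\ref{j22}) with all inequalities reversed is precisely that analogous proof. The only nitpick is your blanket phrase that the isolated difference ``comes out nonnegative rather than nonpositive'' --- for items 9 and 10 the sign flips the other way (e.g.\ $r(w_{n-1}-w_3)\leq 0$ gives $w_3/w_{n-1}\geq 1$) --- but your per-item sign-tracking and worked instances are right, so this is purely a matter of wording.
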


\medskip

We only give the proof of Theorem \ref{main3} because Theorems \ref{main4}, \ref{main5} and \ref{main6} are consequences of Theorem \ref{main3} and Lemma \ref{lem1}.

\begin{proof5} Let
\[
w=\left[
\begin{array}
[c]{cccc}%
w_{1} & w_{2} & w_{3} & w_{4}%
\end{array}
\right]  ^{T}
\]
be the Perron vector  of $Z_n(x,y,z,a)$, associated with the Perron eigenvalue $r$.
Recall that if $n>5$ then $w_i=w_3$, for $4\leq i\leq n-2$, and  $3\rightarrow i$ and $i\rightarrow 3$ are edges of $G_{Z_n(x,y,z,a),w}$. Therefore, if we find  a directed Hamiltonian cycle among the vertices $1,2,3,n-1,n$ of $G_{Z_n(x,y,z,a),w}$ then we get a directed Hamiltonian cycle in $G_{Z_n(x,y,z,a),w}$.

The cases described in the statement of the theorem are:
\begin{itemize}
\item $x<z<1\leq a<y$,
\item $x<z<a\leq 1<y$,
\item $x<z<a<y\leq 1$,
\item $1\leq x<y<a<z$,
\item $x<1\leq y<a<z$,
\item $x<y\leq 1<a<z$,
\item $x\leq a<1<z\leq y$,
\item $x\leq a<1<y\leq z$.
\end{itemize}

To facilitate the proof and using Lemmas \ref{le-1} and \ref{le-2}, in the following tables we show the efficiency of the Perron eigenvector for all cases not described in the statement of the theorem.

In Table \ref{tab1}, we consider the cases that, using Lemmas \ref{le-1} and \ref{le-2}, produce a directed Hamiltonian cycle. If $b,c$ are two real numbers, we write $b,c$ to denote $\min\{b,c\}\leq \max\{b,c\}$.

\begin{table}[H] \centering
\footnotesize \begin{tabular}{|c|c||c|c|} \hline & & & \\ relation among $1,x,y,z,a$ &  cycle & relation among $1,x,y,z,a$ &  cycle \\ \hline\hline  & & & \\
$x\leq 1\leq a\leq y,z$ & $1,n,2,3,n-1,1$ &  $x\leq z\leq 1\leq y\leq a$  & $1,n,3,n-1,2,1$ \\ \hline   & & & \\
$1\leq x\leq y,z \leq a$ & $1,3,n,n-1,2,1$ & $x\leq a\leq y\leq 1\leq z$  & $1,n,2,n-1,3,1$  \\ \hline   & & & \\
$x\leq a\leq z\leq 1\leq y$  & $1,n,3,2,n-1,1$ & $x\leq y\leq 1\leq z\leq a$  & $1,n,n-1,2,3,1$ \\ \hline   & & & \\
$x\leq y,z\leq a\leq 1$  & $1,n,n-1,3,2,1$  & &
 \\ \hline
   \end{tabular}
   $\ \ \ \ \ \ \ \ $%
\caption{Efficiency of the Perron vector of $Z_{n}(x,y,z,a)$ with a directed Hamiltonian cycle.  }\label{tab1}%
\end{table}%

In Table \ref{tab2}, we consider the cases that, using Lemmas \ref{le-1} and \ref{le-2}, produce two directed cycles whose union give us the strong connectivity of the digraph $G_{Z_n(x,y,z,a),w}$. If $b,c$ are two real numbers, we write $b,c$ to denote $\min\{b,c\}\leq \max\{b,c\}$.

\begin{table}[H] \centering
\footnotesize \begin{tabular}{|c|c||c|c|} \hline & & & \\ relation among $1,x,y,z,a$ &  cycle & relation among $1,x,y,z,a$ &  cycle \\ \hline\hline  & & & \\
 $x\leq 1\leq y, z\leq a$  & $2,n-1,3,2$ and & $x\leq a\leq y, z\leq 1$  & $3,1,n,3$ and \\
  &  $1,n,n-1,2,1$ & & $3,2,n-1,3$ \\\hline  & & & \\
  $x\leq z,y\leq 1\leq a$  & $1,n,3,1$ and & $1\leq x\leq a\leq y,z$ & $3,n,2,3$ and\\
&  $1,n,n-1,2,1$ & & $3,n-1,1,3$ \\\hline
\end{tabular}
   $\ \ \ \ \ \ \ \ $%
\caption{Efficiency of the Perron vector of $Z_{n}(x,y,z,a)$ with two directed cycles.  }\label{tab2}%
\end{table}%

In Table \ref{tab3}, we consider the cases that, using Lemmas \ref{le-1}, \ref{le-2} and Theorem \ref{t1}, produce a directed cycle and an extra edge whose union give us the strong connectivity of the digraph $G_{Z_n(x,y,z,a),w}$. Recall that by Theorem \ref{t1} we know there is no source in the digraph $G_{Z_n(x,y,z,a),w}$.

\begin{table}[H] \centering
\footnotesize \begin{tabular}{|c|c|c|c|} \hline & & & \\ relation among $1,x,y,z,a$ &  cycle & extra edge & possible source \\ \hline\hline  & & & \\
$1\leq x\leq z\leq a\leq y$ & $3,n,n-1,1,3$ &  $(2,3)$  & $2$ \\ \hline  & &  &\\
$x\leq y\leq a\leq z\leq 1$ & $3,2,1,n,3$ &  $(n-1,3)$ & $n-1$  \\ \hline
\end{tabular}
   $\ \ \ \ \ \ \ \ $%
\caption{Efficiency of the Perron vector of $Z_{n}(x,y,z,a)$ with a directed cycle and an extra edge.  }\label{tab3}%
\end{table}%

In Table \ref{tab4}, we consider the cases that, using Lemmas \ref{le-1}, \ref{le-2} and Theorem \ref{t1}, produce a directed cycle and two extra edges whose union give us the strong connectivity of the digraph $G_{Z_n(x,y,z,a),w}$. Recall that by Theorem \ref{t1} we know there is no source in the digraph $G_{Z_n(x,y,z,a),w}$.

\begin{table}[H] \centering
\footnotesize \begin{tabular}{|c|c|c|c|} \hline & & & \\ relation among $1,x,y,z,a$ &  cycle & two extra edges & possible source \\ \hline\hline  & & &  \\
$x\leq 1\leq z\leq a\leq y$ & $1,n,n-1,1$ &  $(2,3),\ (3,n-1)$ & $2$  \\ \hline  & & &  \\
$x\leq y\leq a\leq 1\leq z$ & $1,n,2,1$ &  $(n-1,3),\ (3,1)$ & $n-1$  \\ \hline
\end{tabular}
   $\ \ \ \ \ \ \ \ $%
\caption{Efficiency of the Perron vector of $Z_{n}(x,y,z,a)$ with a directed cycle and two extra edges.  }\label{tab4}%
\end{table}%

Using Tables \ref{tab1}, \ref{tab2}, \ref{tab3}, \ref{tab4} we get the result.
 \end{proof5}

\vspace{2ex}

\begin{lemma}\label{le-3} Let $n\geq 5$ and $Z_n(x,y,z,a)$, with $x,y,z,a>0$,  be the matrix in (\ref{recmatrix}). Let $w$ be the Perron eigenvector of $Z_n(x,y,z,a)$.
\begin{enumerate}
\item If $(3,2)$ is an edge of $G_{Z_n(x,y,z,a),w}$, $\max\{a,z\}\leq 1$ and $a\neq z$, then $(2,3)$ is not an edge of $Z_n(x,y,z,a)$.
    \item If $(3,1)$ is an edge of $G_{Z_n(x,y,z,a),w}$, $\max\{x,y\}\leq 1$ and $x\neq y$, then $(1,3)$ is not an edge of $Z_n(x,y,z,a)$.
        \item If $(3,n)$ is an edge of $G_{Z_n(x,y,z,a),w}$, $\min\{x,z\}\geq 1$ and $x\neq z$, then $(n,3)$ is not an edge of $Z_n(x,y,z,a)$.
             \item If $(3,n-1)$ is an edge of $G_{Z_n(x,y,z,a),w}$, $\min\{y,a\}\geq 1$ and $a\neq y$, then $(n-1,3)$ is not an edge of $Z_n(x,y,z,a)$.
    \end{enumerate}
\end{lemma}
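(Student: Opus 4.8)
The plan is to treat the four parts uniformly, since each asserts that a particular anti-parallel pair of edges between vertex $3$ and one of the vertices $1,2,n-1,n$ cannot both be present in $G_{Z_n(x,y,z,a),w}$. I would prove part (1) in detail and let the remaining three follow by the same scheme, mirroring the symmetry already exploited in Lemmas \ref{le-1} and \ref{le-2}.

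First I would translate the edge conditions into inequalities on the coordinates of $w$. Since the relevant entries of the matrix are $a_{32}=a_{23}=1$, the hypothesis that $(3,2)$ is an edge of $G_{Z_n(x,y,z,a),w}$ means exactly $w_3\geq w_2$, while the conclusion that $(2,3)$ is \emph{not} an edge means exactly $w_2<w_3$. Thus it suffices to rule out the equality $w_2=w_3$: once $w_2\neq w_3$ is established together with $w_3\geq w_2$, we get $w_3>w_2$, so $(2,3)$ automatically fails to be an edge.

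The crux is therefore to show that $w_2=w_3$ is impossible under $\max\{a,z\}\leq 1$ and $a\neq z$. For this I would substitute $w_2=w_3$ into the difference equation (\ref{j17}), namely $r(w_3-w_2)+(a-1)w_{n-1}+(z-1)w_n=0$. The term $r(w_3-w_2)$ vanishes, leaving $(a-1)w_{n-1}+(z-1)w_n=0$. Since $w$ is a positive eigenvector, $w_{n-1},w_n>0$, and since $a-1\leq 0$ and $z-1\leq 0$ by the hypothesis $\max\{a,z\}\leq 1$, both summands are nonpositive; their sum can vanish only if each is zero, forcing $a=1$ and $z=1$. This contradicts $a\neq z$, completing part (1).

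For parts (2), (3), (4) the same argument applies verbatim after replacing (\ref{j17}) by the appropriate difference equation and reading off its sign pattern: part (2) uses (\ref{j14}), whose coefficients $y-1$ and $x-1$ are both nonpositive under $\max\{x,y\}\leq 1$; part (3) uses (\ref{j21}), whose coefficients $1-\tfrac{1}{x}$ and $1-\tfrac{1}{z}$ are both nonnegative under $\min\{x,z\}\geq 1$; and part (4) uses (\ref{j20}), whose coefficients $1-\tfrac{1}{y}$ and $1-\tfrac{1}{a}$ are both nonnegative under $\min\{y,a\}\geq 1$. In each case the assumed strict inequality among the two perturbed parameters yields the contradiction. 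I do not anticipate a genuine obstacle here; the only point requiring care is to match each conclusion to the correct difference equation and to check that the stated hypothesis makes both coefficients of that equation share a single sign, so that a vanishing combination of positive multiples of $w_{n-1},w_n$ (respectively $w_1,w_2$) forces both coefficients to be zero.
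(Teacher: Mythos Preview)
Your proposal is correct and follows essentially the same approach as the paper: both reduce to showing $w_2\neq w_3$ (respectively $w_1\neq w_3$, $w_n\neq w_3$, $w_{n-1}\neq w_3$), substitute the putative equality into the relevant difference equation (\ref{j17}), (\ref{j14}), (\ref{j21}), (\ref{j20}), and observe that the resulting identity $(a-1)w_{n-1}+(z-1)w_n=0$ (and its analogues) is impossible under the stated sign hypotheses and the strict inequality between the two parameters. Your write-up is in fact slightly more explicit than the paper's in spelling out why the vanishing forces both coefficients to be zero.
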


\begin{proof} \begin{enumerate}
\item Suppose that $(2,3)$ is an edge of $Z_n(x,y,z,a)$. Then $\frac{w_2}{w_3}=1$.
By equation \ref{j17}, we get $$(a-1)w_{n-1}+(z-1)w_{n}=0.$$ This is impossible because $\max\{a,z\}\leq 1$ and $a\neq z$.
\item Suppose that $(1,3)$ is an edge of $Z_n(x,y,z,a)$. Then $\frac{w_1}{w_3}=1$.
By equation \ref{j14}, we get $$(y-1)w_{n-1}+(x-1)w_{n}=0.$$ This is impossible because $\max\{x,y\}\leq 1$ and $x\neq y$.
\item Suppose that $(n,3)$ is an edge of $Z_n(x,y,z,a)$. Then $\frac{w_n}{w_3}=1$.
By equation \ref{j21}, we get $$\left(1-\frac{1}{x}\right)w_{1}+\left(1-\frac{1}{z}\right)w_{2}=0.$$ This is impossible because $\min\{x,z\}\geq 1$ and $x\neq z$.
\item Suppose that $(n-1,3)$ is an edge of $Z_n(x,y,z,a)$. Then $\frac{w_{n-1}}{w_3}=1$.
By equation \ref{j20}, we get $$\left(1-\frac{1}{y}\right)w_{1}+\left(1-\frac{1}{a}\right)w_{2}=0.$$ This is impossible because $\min\{a,y\}\geq 1$ and $a\neq y$.
\end{enumerate}
\end{proof}

\vspace{2ex}

\begin{proof6}  Let
\[
w=\left[
\begin{array}
[c]{ccccc}%
w_{1} & w_{2} &\cdots & w_{n-1} & w_{n}%
\end{array}
\right]  ^{T}
\]
be the Perron vector associated with the Perron eigenvalue $r$ of $Z_n(x,y,z,a)$, with $n\geq 5$. Recall that if $n>5$ then $w_i=w_3$, for $4\leq i\leq n-2$, and  $3\rightarrow i$ and $i\rightarrow 3$ are edges of $G_{Z_n(x,y,z,a),w}$.

Suppose that $G_{Z_n(x,y,z,a),w}$ has a sink. By Theorem \ref{blanq}, $w$ is inefficient for $Z_n(x,y,z,a).$

Conversely, suppose that $w$ is inefficient for $Z_n(x,y,z,a).$ Using the statements of the Theorems \ref{main3}, \ref{main4}, \ref{main5}, \ref{main6}, we conclude that there are $32$ cases to study.

To facilitate the proof and using Lemmas \ref{le-1} and \ref{le-2}, in the following tables we show when the Perron eigenvector is inefficient.

In Table \ref{tab5}, we consider the cases that, using Lemmas \ref{le-1}, \ref{le-2}, produce a directed cycle and an extra edge.

\begin{table}[H] \centering
\footnotesize \begin{tabular}{|c|c|c|c|} \hline & & & \\ relation among $1,x,y,z,a$ &  cycle & extra edge & possible sink \\ \hline\hline  & & & \\
$z< x< y< a\leq 1$ & $3,2,n,n-1,3$ &  $(3,1)$  & $1$ \\ \hline  & & & \\
$x< z< a< y\leq 1$ & $3,1,n,n-1,3$ &  $(3,2)$  & $2$ \\ \hline  & & & \\
$y< a< z< x\leq 1$ & $3,1,n-1,n,3$ &  $(3,2)$ & $2$  \\ \hline  & &  &\\
$a< y< x< z\leq 1$ & $3,2,n-1,n,3$ &  $(3,1)$  & $1$ \\ \hline  & &  &\\
$1\leq a< z< x< y$ & $3,n-1,1,2,3$ &  $(3,n)$  & $n$ \\ \hline  & & & \\
$1\leq y< x< z< a$ & $3,n-1,2,1,3$ &  $(3,n)$ & $n$  \\ \hline  & & & \\
$1\leq z< a< y< x$ & $3,n,1,2,3$ &  $(3,n-1)$ & $n-1$  \\ \hline  & & &  \\
$1\leq x< y< a< z$ & $3,n,2,1,3$ &  $(3,n-1,3)$ & $n-1$ \\\hline
\end{tabular}
   $\ \ \ \ \ \ \ \ $%
\caption{Inefficiency of the Perron vector of $Z_{n}(x,y,z,a)$ with a directed cycle and an extra edge.  }\label{tab5}%
\end{table}%

In Table \ref{tab6}, we consider the cases that, using Lemmas \ref{le-1}, \ref{le-2}, produce a directed cycle and two extra edges. Using Lemma \ref{le-3}, we conclude that if $w$ is not efficient for $Z_n(x,y,z,a)$ then  $G_{Z_n(x,y,z,a),w}$ has a sink.

\begin{table}[H] \centering
\footnotesize \begin{tabular}{|c|c|c|c|} \hline & & & \\ relation among $1,x,y,z,a$ &  cycle & two extra edges & possible sink \\ \hline\hline  & & &  \\
$x< z< a< 1\leq y$ & $1,n,n-1,1$ &  $(3,2),\ (n,3)$ & $2$  \\ \hline  & & & \\
$a< 1\leq z< x< y$ & $1,2,n-1,1$ &  $(3,n),\ (1,3)$ & $n$  \\ \hline  & & & \\
$z<x< y\leq 1< a$ & $2,n,n-1,2$ &  $(3,1),\ (n,3)$  & $1$ \\ \hline  & & & \\
$y< 1\leq x< z< a$ & $1,n-1,2,1$ &  $(3,n),\ (2,3)$ & $n$  \\ \hline  & & & \\
$y< a< z< 1\leq x$ & $1,n-1,n,1$ &  $(3,2),\ (n-1,3)$ & $2$  \\ \hline  & & &  \\
$z< 1\leq a< y< x$ & $1,2,n,1$ &  $(3,n-1),\ (1,3)$ & $n-1$  \\ \hline  & & & \\
$a< y< x< 1\leq z$ & $2,n-1,n,2$ &  $(3,1),\ (n-1,3)$ & $1$  \\ \hline  & & & \\
$x< 1\leq y< a< z$ & $1,n,2,1$ &  $(3,n-1),\ (2,3)$ & $n-1$  \\\hline
\end{tabular}
   $\ \ \ \ \ \ \ \ $%
\caption{Inefficiency of the Perron vector of $Z_{n}(x,y,z,a)$ with a directed cycle and two extra edges.  }\label{tab6}%
\end{table}%

In Table \ref{tab7}, we consider the cases that, using Lemmas \ref{le-1} and \ref{le-2}, produce a directed cycle without a vertex (possible sink).  Note that the relations among $1,x,y,z,a$, except the boundary, and the cycle that arise in the first two columns of Table \ref{tab7} are the reverses of the relations among $1,x,y,z,a$ and the cycle that arise in the third and fourth columns of Table \ref{tab7}. If $b,c$ are two real numbers, we write $b,c$ to denote $\min\{b,c\}\leq \max\{b,c\}$.

\begin{table}[H] \centering
\footnotesize \begin{tabular}{|c|c||c||c|c|} \hline & && & \\ relation among $1,x,y,z,a$ &  cycle & possible sink & relation among $1,x,y,z,a$ &  cycle \\ \hline\hline  & && & \\
$x<z<1\leq a<y$ & $1,n,3,n-1,1$ &2&  $y<a<1\leq z<x$  & $1,n-1,3,n,1$ \\ \hline  & && & \\
$z<a\leq 1<y<x$ & $1,3,2,n,1$ &n-1 &  $x<y\leq 1<a<z$  & $1,n,2,3,1$ \\
\hline  & & && \\
$z,y<1<a,x$ & $1,n-1,2,n,1$ & 3& $ x, a< 1<z, y$  & $1,n,2,n-1,1$  \\
 \hline  & & && \\
$y<x\leq 1<z<a$ & $1,n-1,2,3,1$ &n&  $a<z\leq 1<x<y$  & $1,3,2,n-1,1$  \\
\hline  & & && \\
$a<y<1\leq x<z$ & $3,n,2,n-1,3$  &1&  $z<x<1\leq y<a$  & $3,n-1,2,n,3$  \\\hline
\end{tabular}
   $\ \ \ \ \ \ \ \ $%
\caption{Inefficiency of the Perron vector of $Z_{n}(x,y,z,a)$ with a directed cycle without a vertex.  }\label{tab7}%
\end{table}%

Using tables \ref{tab5}, \ref{tab6} and \ref{tab7} we get the result.
\end{proof6}

\section{Conclusions\label{scon}}

\hspace{3ex}In this paper we focus on the extensions of a ${\cal PC}_n$ matrix. More specially, in a conjecture proposed by Furtado and Johnson in \cite{f2} concerning the non-existence of a source in the digraph associated with an extension of a ${\cal PC}_n$ matrix and its Perron eigenvector. Using this conjecture, we studied the efficiency of the Perron eigenvector for some
matrices obtained from
consistent matrices by perturbing four entries and the corresponding
reciprocal entries, so that there is a $2\times2$ submatrix containing four
of the perturbed entries and not containing a diagonal entry.  Moreover, we showed that the inefficiency of the Perron eigenvector of our studied matrices is equivalent to the existence of a sink in the digraph associated with the matrix and the Perron eigenvector.

We also commented on known results and the change in the relations of the weights of the corresponding components of the Perron eigenvectors of a ${\cal PC}_n$ matrix, $A$, of an extension of $A$. So, this work leaves a question when a ${\cal PC}_n$ matrix $A$ has an inefficient Perron eigenvector.

Is there an extension $B$ of $A$, with efficient Perron eigenvector, having the same relations of the weights of the components of the Perron eigenvector of $A$ and the first $n$ components of the Perron eigenvector of $B$?

\end{document}